\newcommand{\TITLE}{Quantitative analysis of optimal Sobolev-Lorentz embeddings with \texorpdfstring{$\alpha$}{alpha}-homogeneous weights}
\newcommand{\AUTHORS}{Petr Gurka, Jan Lang, Zden\v{e}k Mihula}
\newcommand{\ABSTRAKT}{Optimal weighted Sobolev-Lorentz embeddings with homogeneous weights in open convex cones are established, with the exact value of the optimal constant. These embeddings are non-compact, and this paper investigates the structure of their non-compactness quantitatively. Opposite to the previous results in this direction, the non-compactness in this case does not occur uniformly over all sub-domains of the underlying domain, and the problem is not translation invariant, and so these properties cannot be exploited here. Nevertheless, by developing a new approach based on a delicate interplay between the size of suitable extremal functions and the size of their supports, the exact values of the (ball) measure of non-compactness and of all injective strict \texorpdfstring{$s$}{s}-numbers (in particular, of the Bernstein numbers) are obtained. Moreover, it is also shown that the embedding is not strictly singular.}
\title{\TITLE}
\author{\AUTHORS}
\address[A1]{Petr Gurka, Department of Mathematics, Czech University of Life Sciences Prague, 165 21, Prague 6, Czech Republic; Department of Mathematics, College of Polytechnics Jihlava, Tolstého 16, 586 01, Jihlava, Czech Republic; Department of Mathematical Analysis, Faculty of Mathematics and Physics, Charles University, Sokolovská 83, 186 75 Praha 8, Czech Republic\\ \href{https://orcid.org/0000-0002-0995-4711}{\itshape ORCID~0000-0002-0995-4711}}
\email{gurka@tf.czu.cz}
\address{Jan Lang, Department of Mathematics, The Ohio State University, 231 West 18th Avenue, Columbus, OH 43210-1174, USA; Czech Technical University in Prague, Faculty of Electrical Engineering, Department of Mathematics, Technick\'a~2, 166~27 Praha~6, Czech Republic\\ \href{https://orcid.org/0000-0003-1582-7273}{\itshape ORCID~0000-0003-1582-7273}}
\email{lang.162@osu.edu}
\address{Zden\v ek Mihula, Czech Technical University in Prague, Faculty of Electrical Engineering, Department of Mathematics, Technick\'a~2, 166~27 Praha~6, Czech Republic\\ \href{https://orcid.org/0000-0001-6962-7635}{\itshape ORCID~0000-0001-6962-7635}}
\email[Corresponding author]{mihulzde@fel.cvut.cz}
\numberwithin{equation}{section}
\subjclass[2020]{46E35, 47B06}
\keywords{Sobolev spaces, Sobolev-Lorentz embeddings, homogeneous weights, compactness, Bernstein numbers, measure of non-compactness, singular operators}
\thanks{This research was partly supported by grant No.~23-04720S of the Czech Science Foundation, and by project OPVVV CAAS CZ.02.1.01/0.0/0.0/16\_019/0000778.}
\DeclareMathOperator{\spn}{span}
\theoremstyle{plain}
\newtheorem{thm}{Theorem}[section]
\newtheorem{prop}[thm]{Proposition}
\theoremstyle{definition}
\newtheorem{rem}[thm]{Remark}
\theoremstyle{remark}
\def\R{{\mathbb R}}
\def\Rde{{\R^d}}
\def\Ne{{\mathbb{N}}}
\def\al{\alpha}
\def\dd{\text{\rm\,d}} 
\def\spt{\operatorname{supp}}  
\begin{document}

\setcitestyle{numbers}
\bibliographystyle{abbrvnat}

\begin{abstract}
\ABSTRAKT
\end{abstract}

\maketitle

\section{Introduction}
It is a truth generally acknowledged that Sobolev embeddings hold a prominent position in various areas of mathematics, making comprehensive understanding of their internal structure and behavior essential. One of their oft-studied aspects is their compactness and its quality. The quality of compactness is quite often analyzed through the (ball) measure of non-compactness or the decay rate of different $s$-numbers. The former is a quantity of a geometric nature (introduced by Kuratowski in~\cite{Kur:30}), whereas the latter is an axiomatic approach (introduced by Pietsch in~\cite{P:74}) suitable for studying fine properties of operators (see~Section~\ref{sec:prel} for precise definitions). Loosely speaking, they can be used to obtain quantitative information about ``how much compact'' an operator is, or about, when the operator is not compact, whether it has some better properties than those possessed by merely bounded operators.

The quality of compactness of compact Sobolev embeddings has been intensively studied, and it is closely related to the spectral theory of the corresponding differential operators associated with them and provides estimates for the growth of their eigenvalues (e.g., see~\cite{CS:90, EE:18, ET:96}). Although surprisingly less attention has been devoted to studying the structure of non-compact Sobolev embeddings, their structure is indisputably of interest, too. For example, their measure of non-compactness may be related to the shape of the essential spectrum (see~\cite{EE:18}), and the rate of decay of their $s$-numbers can be used for deriving that they belong to ``better classes'' of operators than that of merely bounded ones, such as the class of (finitely) strictly singular operators (see~\cite{BG:89, LaMi23}, cf.~\cite{LR-P:14}). This paper studies certain (weighted) non-compact Sobolev embeddings (see below for more information), shedding some light on the structure of their non-compactness.

There are several ways that can cause non-compactness of Sobolev embeddings, such as:
\begin{enumerate}[(a)]
	\item when the underlying domain is unbounded (see~\cite{AFbook}, cf.~\cite{EML:21});
	\item when the boundary of the underlying domain is excessively irregular (see~\cite{KMR:01, LM:08, Mabook, MP:97};
	\item when the target function norm is too strong (see~\cite{KP:08, LMP:22, S:15} and references therein).
\end{enumerate}
Among these possibilities, the last one is particularly intriguing because it has not been explored quantitatively much, despite the long-standing interest in optimal Sobolev embeddings (i.e., those equipped with, in a sense, the strongest possible target function norm; e.g., see \cite{CPS:15} and references therein). Previous works investigating the case (c) (see \cite{B:20, H:03, LaMi23, LMOP:21}) dealt with standard (unweighted) Sobolev embeddings on bounded domains. Their important feature exploited there is that they are translation invariant, and
, loosely speaking, their non-compactness is spread uniformly over all sub-domains.

In this paper, we will consider quite a general non-compact weighted Sobolev embedding, whose non-compactness does not occur uniformly over all sub-domains of the underlying domain; instead, the embedding may become non-compact only in the sub-domains touching the boundary of the underlying domain but remains compact in all other bounded sub-domains (with regular boundary) of the domain. The Sobolev embedding in question is
\begin{equation}\label{intro:emb}
E\colon V^1 L^{p,q}(\Sigma, \mu) \to L^{p^*, q}(\Sigma, \mu).
\end{equation}
Here $\Sigma\subseteq \Rde$, $d\geq2$, is an open convex cone with vertex at the origin, endowed with a nonnegative weight $w$ that is $\alpha$-homogeneous, $\alpha>0$, and its $(1/\alpha)$-th power is concave. The Sobolev space $V^1 L^{p,q}(\Sigma, \mu)$ is a suitable weighted Sobolev space built on the Lorentz space $L^{p,q}(\Sigma, \mu)$, $1\leq q\leq p < D$, $D=d+\al$, $\mu$ is the weighted measure $\dd\mu(x)=w(x) \dd x$, and $p^* = (Dp)/(D - p)$ (see Section~\ref{sec:prel} for precise definitions). An important example of such weights is the monomial weight defined as $w(x)=x_1^{A_1} \cdots x_k^{A_k}$ on $\Sigma = \{x_i>0, i = 1, \dots, k\}$, where $1\leq k \leq d$ and $A_i > 0$. These monomial weights have been quite fashionable since they appeared in \cite{CR-O:13, CR-O:13b} (cf.~\cite{CGPR-O:22}), in connection with the regularity of stable solutions to certain planar reaction--diffusion problems (see~\cite{CR-OS:16} for other interesting examples). We will obtain the exact value of the measure of non-compactness and of the so-called Bernstein numbers of the embedding, which apart from their connection with the spectral and operator theory are useful for proving lower bounds on various approximation quantities (e.g., see~\cite{DNS:06, K:16, P:85}).

A few comments are in order. First, the Sobolev embedding \eqref{intro:emb} generalizes and improves \cite[Inequality (1.3)]{CR-OS:16}, which is restricted to Lebesgue spaces. The Sobolev space there is built on the Lebesgue space $L^p(\Sigma, \mu)$, which coincides with the Lorentz space $L^{p,q}(\Sigma, \mu)$ when $q = p$, and the target function space is the Lebesgue space $L^{p^*}(\Sigma, \mu)$, which is strictly bigger (i.e., its function norm is essentially weaker) than the Lorentz space $L^{p^*, p}(\Sigma, \mu)$ (see~\cite[Chapter~4, Section~4]{BS}). When this paper was submitted, the weighted Sobolev-Lorentz embedding \eqref{intro:emb} itself was new in this generality, to the best of our knowledge. Later, the validity of Sobolev embeddings on convex cones endowed with $\alpha$-homogeneous weights was thoroughly studied in the framework of rearrangement-invariant function spaces in \cite{D:24}. The Sobolev embedding \eqref{intro:emb} is contained in \cite[Theorem~4.1]{D:24}, which also shows that the Lorentz space $L^{p^*, p}(\Sigma, \mu)$ on the right-hand side of \eqref{intro:emb} cannot be replaced by an essentially smaller rearrangement-invariant function space. Nevertheless, our proof of \eqref{intro:emb} is more direct and gives immediately the value of the embedding constant. Secondly, even though the open convex cone $\Sigma$ is unbounded, the lack of compactness is not caused by its unboundedness. The embedding is still non-compact when the underlying domain is replaced by $\Sigma \cap B_R$, where $B_R$ is the open ball centered at the origin with radius $R>0$. Next, the regularity of $\partial\Sigma$ is also completely immaterial. Lastly, while the weights considered in this paper may or may not be singular near parts of $\partial\Sigma$, they are always singular at the origin.

The paper is structured as follows. In the next section, we introduce the necessary notation and recall some preliminary results. In Section~\ref{sec:emb}, we prove the Pólya-Szeg\H{o} inequality for \eqref{intro:emb} (Theorem~\ref{thm:PS}), and then prove the optimal weighted Sobolev-Lorentz inequality in $\Sigma$ (Theorem~\ref{thm:Sob-Lor_emb}). Moreover, we obtain the inequality with the optimal constant (Remark~\ref{rem:optimal_constant}). Finally, in Section~\ref{sec:bern}, we will prove that the embedding is so-called maximally noncompact, i.e., its (ball) measure of non-compactness is equal to its norm. We also obtain the exact values of the Bernstein numbers of the embedding \eqref{intro:emb} (Theorem~\ref{thm:bernstein_numbers}), showing that they all coincide with its norm. Moreover, leveraging the fact that the non-compactness is the worst in neighborhoods of the origin, we construct an infinitely-dimensional subspace of $V^1 L^{p,q}(\Sigma, \mu)$ restricted onto which the embedding is an isomorphism into $L^{p\, ,^{\!\!\!*}q}(\Sigma, \mu)$ (Proposition \ref{prop:almost_extremal_system}), which shows that the embedding is not strictly singular.


\section{Preliminaries}\label{sec:prel}

This section contains basic definitions and necessary preliminary results.

\subsection*{Open convex cone and weighted measure}
Throughout the paper $\Sigma$ denotes an open convex cone in $\Rde$, $d\in\Ne$, $d\geq2$, with vertex at the origin.
Furthermore, $w\colon \overline{\Sigma} \to [0, \infty)$ is a nonnegative (not identically zero) continuous function that is $\al$-homogeneous, $\al>0$,
and such that $w^{1/\alpha}$ is concave in $\Sigma$. Recall that $w$ is $\al$-homogeneous if
\begin{equation} \label{Wfunc}
  w(\kappa x)=\kappa^\al w(x)\quad\text{for any $x\in\overline{\Sigma}$ and all $\kappa>0$}.
\end{equation}
Throughout the paper $\mu$ denotes the weighted measure on $\Sigma$ defined as
\begin{equation*}
  \dd\mu(x)=w(x) \dd x.
\end{equation*}
For future reference, we set
\begin{equation}\label{NumD}
  D=d+\al.
\end{equation}

A large number of interesting examples of cones $\Sigma$ and weights $w$ satisfying the assumptions can be found in \cite[Section~2]{CR-OS:16}. For example, the following weights on $\Sigma = \Sigma_1 \cap \cdots \cap \Sigma_k$, $k\in\{1, \dots, d\}$, where $\Sigma_j\neq\Rde$, $j=1, \dots, k$, are open convex cones in $\Rde$ with vertex at the origin. Given $A_1, \dots, A_k>0$, the weight $w\colon \overline{\Sigma} \to [0, \infty)$ defined as
\begin{equation*}
w(x) = \prod_{j = 1}^k \operatorname{dist}(x, \partial \Sigma_j)^{A_j},\ x\in \overline{\Sigma},
\end{equation*}
satisfies the assumptions. In particular, when $\Sigma_j = \{x\in\Rde: x_j > 0\}$, $w$ is the monomial weight defined as (see~\cite{CR-O:13})
\begin{equation*}
w(x) = x_1^{A_1}\cdots x_k^{A_k},\ x\in \overline{\Sigma}.
\end{equation*}

\subsection*{Lebesgue space}
We denote by $L^p(\Sigma,\mu)$, $p\in[1,\infty]$, the \emph{Lebesgue space} of all  measurable functions~$f$ in $\Sigma$ with finite norm
\begin{equation*}
  \|f\|_{p,\mu}=
  \bigg\{
    \begin{array}{ll} \vspace{3pt}
      \big( \int_\Sigma |f(x)|^p\dd \mu(x) \big)^{1/p} & \hbox{if $p\in[1, \infty)$,} \\
      \text{{$\mu$}-ess sup\,}_{x\in\Sigma}|f(x)| & \hbox{if $p=\infty$.}
    \end{array}
\end{equation*}

\subsection*{Distribution function and rearrangements}
For a measurable function  $f$ in $\Sigma$ we define its \emph{distribution function with respect to $\mu$} as
\begin{equation*}
  f_{*\mu}(\tau)=\mu\big(\{x\in \Sigma: |f(x)|>\tau\}\big), \quad \tau>0,
\end{equation*}
and its \emph{nonincreasing rearrangement with respect to $\mu$} as
\begin{equation*}
  f^*_\mu(t)=\inf\{\tau>0: f_{*\mu}(\tau)\le t\},\quad t>0.
\end{equation*}
The function $f^\bigstar_\mu$ defined as
\begin{equation*}
f^\bigstar_\mu(x) = f^*_\mu(C_D |x|^D),\quad x\in\Rde,
\end{equation*}
where
\begin{equation}\label{unit_ball_in_sigma_measure}
C_D = \mu(B_1\cap \Sigma),
\end{equation}
is the \emph{radially nonincreasing rearrangement} of $f$ \emph{with respect to} $\mu$. We use the notation
\begin{equation*}
  B_r=\{x\in\Rde: |x|<r\},\quad r>0.
\end{equation*}
Thanks to the $\al$-homogeneity of $w$, we have
\begin{equation*}
\mu(B_r\cap \Sigma) = C_D r^D.
\end{equation*}
Note that the function $f^\bigstar_\mu$ is nonnegative and radially nonincreasing. Though it is defined on the whole $\Rde$, it depends only on function values of $f$ in $\Sigma$. Furthermore, the functions $f$ and $f^\bigstar_\mu$ are equimeasurable with respect to $\mu$, that is,
\begin{equation*}
  \mu\big(\{x\in \Sigma: |f(x)|>\tau\}\big)
  =
  \mu\big(\{x\in\Sigma: |f^\bigstar_\mu(x)|>\tau\}\big), \quad \tau>0.
\end{equation*}

When $E\subseteq(0, \infty)$ is (Lebesgue) measurable, we denote its Lebesgue measure by $|E|$. For a measurable function $\phi$ in $(0, \infty)$, we define its \emph{nonincreasing rearrangement} (with respect to the Lebesgue measure) as
\begin{equation*}
\phi^*(t) = \inf \{\tau > 0: |\{s\in(0, \infty): |\phi(s)| > \tau\}| \leq t\},\ t\in(0, \infty).
\end{equation*}

\subsection*{Lorentz space}
Let $p,q\in[1, \infty]$. Assume that either $1\leq q\leq p < \infty$ or $p = q = \infty$. We denote by $L^{p,q}(\Sigma,\mu)$ the set of all measurable functions $f$ in $\Sigma$ such that
\begin{equation*}
\|f\|_{p,q,\mu} = \|t^{\frac1{p} - \frac1{q}} f^*_\mu(t)\|_{L^q(0, \infty)} < \infty.
\end{equation*}
Under the imposed restriction on the parameters $p$ and $q$, the functional $\|\cdot\|_{p,q,\mu}$ is a norm on $L^{p,q}(\Sigma,\mu)$ (e.g., \cite[Chapter~4, Theorem~4.3]{BS}). The function space $L^{p,q}(\Sigma,\mu)$ is usually called the \emph{Lorentz space} with indices $p, q$. Thanks to the layer cake representation formula (e.g., \cite[Chapter~1, Proposition~1.8]{BS}), we have $\|\cdot\|_{p,p,\mu} = \|\cdot\|_{p,\mu}$. The Lorentz norm $\|\cdot\|_{p,q,\mu}$ can be expressed in terms of the distributional function as
\begin{equation}\label{prel:Lorentz_norm_distributional_function}
\|f\|_{p,q,\mu} = \Big( p \int_0^\infty t^{q - 1} f_{*\mu}(t)^\frac{q}{p} \dd t\Big)^{\frac1{q}}
\end{equation}
for every measurable functions $f$ in $\Sigma$. The Lorentz spaces are nested in the sense that
\begin{equation*}
L^{p,q_1}(\Sigma,\mu) \subsetneq L^{p,q_2}(\Sigma,\mu) \quad \text{when $1\leq q_1 < q_2 \leq p < \infty$}.
\end{equation*}

Note that the functions equimeasurable with respect to $\mu$ have the same $\|\cdot\|_{p,q,\mu}$ norm. In particular,
\begin{equation*}
\|f\|_{p,q,\mu} = \|f^\bigstar_\mu\|_{p,q,\mu}
\end{equation*}
for every measurable function $f$ in $\Sigma$.

When $1\leq q\leq p < \infty$, the Lorentz norm $\|\cdot\|_{p,q,\mu}$ is \emph{absolutely continuous}, that is,
\begin{equation*}
  \|f\chi_{E_n}\|_{p,q,\mu}\to0 \quad\text{for every sequence
  $\{\chi_{E_n}\}_{n=1}^\infty$ satisfying $E_n\to\emptyset$ $\mu$-a.e.}
\end{equation*}
Here $E_n\to\emptyset$ $\mu$-a.e. means that the sequence $\{\chi_{E_n}\}_{n=1}^\infty$ converges pointwise to $0$ $\mu$-a.e.

\subsection*{Sobolev-Lorentz space}
Let $1\leq q\leq p < \infty$. We denote by $V^1L^{p,q}(\Sigma, \mu)$ the completion of $\mathcal C_c^1(\Rde)$\textemdash the space of continuously differentiable functions with compact
support in $\Rde$\textemdash with respect to the norm
\begin{equation*}
\|u\|_{V^1L^{p,q}(\Sigma, \mu)} = \|\nabla u\|_{p,q,\mu};
\end{equation*}
two functions that coincide $\mu$-a.e.\ in $\Sigma$ are identified. Note that the functions from $V^1L^{p,q}(\Sigma, \mu)$ need not vanish on $\partial\Sigma$.
We write $\|\nabla u\|_{p,q,\mu}$ for short instead of $\||\nabla u|\|_{p,q,\mu}$, where $|\nabla u|$ stands for the Euclidean norm of $\nabla u$.

\subsection*{Measure of non-compactness, Bernstein numbers, and strictly singular operators}
Throughout this subsection we assume that $X$ and $Y$ are Banach spaces. We denote by $B(X,Y)$ the collection of all bounded linear operators from  $X$ to  $Y$, and by $B_X$ the closed unit ball of $X$ centered at the origin.

The \emph{(ball) measure of non-compactness} $\beta(T)$ of  $T\in B(X,Y)$ is defined as
\begin{equation*}
\beta(T) = \inf\{r>0: \text{$T(B_X)$ can be covered by finitely many balls in $Y$ with radius $r$}\}.
\end{equation*}
Clearly $0 \leq \beta(T) \leq \|T\|$, and it is easy to see that $T$ is compact if and only if $\beta(T) = 0$. The measure of non-compactness is a geometric quantity that, from the point of view of finite coverings, measures how far from the class of compact operators the operator $T$ is (e.g., see~\cite{BG:80, EE:18} for more information). We say that the operator $T$ is \emph{maximally non-compact} if $\beta(T) = \|T\|$.

The $n$th, $n\in\Ne$, \emph{Bernstein number} $b_n(T)$ of $T\in B(X,Y)$ is defined as
\begin{equation*}
b_n(T)=\sup_{X_n \subseteq X} \inf_{\substack{x\in X_n\\ \|x\|_{X} = 1}}\|Tx\|_{Y},
\end{equation*}
where the supremum extends over all $n$-dimensional subspaces of $X$.

Bernstein numbers are an important example of so-called \emph{(strict) $s$-numbers}. Any rule $s\colon T\to\left\{s_n(T)\right\}_{n=1}^\infty$ that assigns each bounded linear operator $T$ from $X$ to $Y$ a sequence $\left\{s_n(T)\right\}_{n=1}^\infty$ of nonnegative numbers having, for every $n\in\Ne$, the following properties:
\begin{itemize}
\item[(S1)] $\|T\|=s_1(T)\geq s_2(T)\geq\cdots\geq0$;
\item[(S2)] $s_n(S+T)\leq s_n(S)+\|T\|$ for every $S\in B(X,Y)$;
\item[(S3)] $s_n(BTA)\leq\|B\|s_n(T)\|A\|$ for every $A\in B(W,X)$ and $B\in B(Y,Z)$, where $W,Z$ are Banach spaces;
\item[(S4)] $s_n(\operatorname{Id}_E\colon E \to E)=1$ for every Banach space $E$ with $\dim E\geq n$;
\item[(S5)] $s_n(T)=0$ if $\operatorname{rank} T< n$
\end{itemize}
is called a (strict) $s$-number. Notable examples of (strict) $s$-numbers are the approximation numbers $a_n$, the Bernstein numbers $b_n$, the Gelfand numbers $c_n$, the Kolmogorov numbers $d_n$, the isomorphism numbers $i_n$, or the Mityagin numbers $m_n$. For their definitions and the difference between strict $s$-numbers and `non-strict' $s$-numbers, the interested reader is referred to \citep[Chapter~5]{EL:11} and references therein.

An operator $T\in B(X,Y)$ is said to be \emph{strictly singular} if there is no infinite-dimensional closed subspace $Z$ of $X$ such that the restriction $T\rvert_{Z}$ of $T$ to $Z$ is an isomorphism of ${Z}$ onto $T(Z)\subseteq Y$. Equivalently, for each infinite-dimensional (closed) subspace $Z$ of $X$,
\begin{equation*}
\inf\left\{  \left\Vert Tx\right\Vert _{Y}\colon \left\Vert x\right\Vert _{X}=1,x\in
Z\right\}  = 0.
\end{equation*}


\section{Optimal Sobolev-Lorentz embedding}\label{sec:emb}
We start by proving a suitable P\'olya-Szeg\H{o} inequality (see~\cite[Chapter~3]{B:19} and references therein).
\begin{thm}\label{thm:PS}
Let $1\leq q\leq p < \infty$. For every function $u\in\mathcal C_c^1(\Rde)$, its radially nonincreasing rearrangement $u^\bigstar_\mu$ with respect to $\mu$ belongs to $V^1L^{p,q}(\Sigma, \mu)$, and
\begin{equation}\label{thm:PS:ineq}
\|\nabla u^\bigstar_\mu\|_{p,q,\mu} \leq \|\nabla u\|_{p,q,\mu}.
\end{equation}
Furthermore, the function $u^*_\mu$ is locally absolutely continuous in $(0, \infty)$, and
\begin{equation}\label{thm:PS:u_bigstar_as_integral}
u^\bigstar_\mu(x) = \int_{C_D|x|^D}^\infty (-u^*_\mu)'(t) \dd t \quad \text{for every $x\in \Rde$},
\end{equation}
where $D$ is defined by \eqref{NumD} and $C_D$ by \eqref{unit_ball_in_sigma_measure}. In particular,
$\spt u^\bigstar_\mu = \overline{B}_R$ where $R$ is such that $\mu(B_R \cap \Sigma) = \mu(\spt u \cap \Sigma)$.
\end{thm}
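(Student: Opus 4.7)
The proof rests on two cornerstones: the weighted coarea formula in $\Sigma$, and the weighted isoperimetric inequality for open convex cones with $\al$-homogeneous weights established by Cabré--Ros-Oton--Serra, which asserts that for every Borel $E\subseteq\Sigma$ of finite $\mu$-measure,
\begin{equation*}
P_w(E;\Sigma)\ge D\,C_D^{1/D}\,\mu(E)^{(D-1)/D},
\end{equation*}
with equality when $E=B_r\cap\Sigma$. All quantitative information will flow from combining these two tools and exploiting the crucial fact that the radial rearrangement $u^\bigstar_\mu$ saturates the isoperimetric inequality level by level.

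I would first handle the regularity of $u^*_\mu$ and the representation~\eqref{thm:PS:u_bigstar_as_integral}. For $u\in\mathcal C_c^1(\R^d)$, Sard's theorem guarantees that almost every $t>0$ is a regular value of $|u|$, so that $\{|u|=t\}\cap\Sigma$ is a smooth hypersurface of finite weighted $(d-1)$-dimensional Hausdorff measure. The weighted coarea formula
\begin{equation*}
u_{*\mu}(t)=\int_t^\infty\!\!\int_{\{|u|=s\}\cap\Sigma}\frac{w}{|\nabla u|}\,\dd\mathcal H^{d-1}\,\dd s
\end{equation*}
then renders $u_{*\mu}$, and hence its generalized inverse $u^*_\mu$, locally absolutely continuous on $(0,\infty)$. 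Inserting $u^\bigstar_\mu(x)=u^*_\mu(C_D|x|^D)$ and using that $u^*_\mu$ vanishes beyond $u_{*\mu}(0^+)=\mu(\spt u\cap\Sigma)$ yields both \eqref{thm:PS:u_bigstar_as_integral} and the description of $\spt u^\bigstar_\mu$. The chain rule then produces the pointwise identity
\begin{equation*}
|\nabla u^\bigstar_\mu|(x)=D\,C_D\,|x|^{D-1}\,(-u^*_\mu)'(C_D|x|^D)\quad\text{a.e.\ in }\R^d,
\end{equation*}
and a standard mollification of $u^\bigstar_\mu$ (which has compact support in $\overline{B}_R$) places it in $V^1L^{p,q}(\Sigma,\mu)$ once its Lorentz norm is controlled.

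To obtain the gradient inequality~\eqref{thm:PS:ineq}, my plan is to reduce it to the Hardy--Littlewood--Pólya majorization
\begin{equation*}
\int_0^s(|\nabla u^\bigstar_\mu|)^*_\mu(\sigma)\,\dd\sigma\le\int_0^s(|\nabla u|)^*_\mu(\sigma)\,\dd\sigma \qquad (s>0), \quad(\star)
\end{equation*}
because for $1\le q\le p<\infty$ the Lorentz norm $\|\cdot\|_{p,q,\mu}$ is rearrangement invariant with the Fatou property, so the Calderón transfer principle converts $(\star)$ into \eqref{thm:PS:ineq}. The majorization $(\star)$ itself would be derived from the truncated $L^1$ Pólya--Szegő inequality
\begin{equation*}
\int_{\{u^\bigstar_\mu>\tau\}\cap\Sigma}|\nabla u^\bigstar_\mu|\,\dd\mu\le\int_{\{|u|>\tau\}\cap\Sigma}|\nabla u|\,\dd\mu, \qquad \tau\ge 0,
\end{equation*}
which is an immediate consequence of the coarea formula combined with the weighted isoperimetric inequality applied level by level to the super-level sets $\{|u|>s\}$ and $\{u^\bigstar_\mu>s\}$; a layer-cake decomposition then matches a target $\mu$-measure $s$ with an appropriate truncation $\tau=\tau(s)$ realizing the extremal super-level set of $|\nabla u^\bigstar_\mu|$.

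The main technical obstacle is precisely this matching step: because $(-u^*_\mu)'$ (and hence the radial profile of $|\nabla u^\bigstar_\mu|$) need not be monotone in $|x|$, the super-level sets $\{|\nabla u^\bigstar_\mu|>\tau\}\cap\Sigma$ may split into finite unions of spherical annuli rather than single balls, obstructing a direct identification with super-level sets of $u^\bigstar_\mu$ itself. Handling these annular regions through careful layer-cake book-keeping, together with the equimeasurability of $u$ and $u^\bigstar_\mu$, is where the bulk of the technical work sits. Once $(\star)$ is secured, the rearrangement invariance of $\|\cdot\|_{p,q,\mu}$ closes the argument.
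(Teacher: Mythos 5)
Your overall strategy is Talenti's, which is also the paper's: coarea formula plus the Cabr\'e--Ros-Oton--Serra weighted isoperimetric inequality, then a rearrangement argument to pass to the Lorentz norm. The regularity step for $u^*_\mu$, the representation \eqref{thm:PS:u_bigstar_as_integral}, and the chain-rule identity for $|\nabla u^\bigstar_\mu|$ are all as in the paper, and invoking the rearrangement-invariance of $\|\cdot\|_{p,q,\mu}$ (Calder\'on/Hardy lemma) once a Hardy--Littlewood--P\'olya majorization is available is a sound closing move.

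The genuine gap is precisely the step you flag and then do not carry out: the majorization $(\star)$ does not follow from the truncated $L^1$ P\'olya--Szeg\H{o} inequality as you suggest. The truncated inequality controls $\int_{\{u^\bigstar_\mu>\tau\}}|\nabla u^\bigstar_\mu|\,\dd\mu$, i.e.\ integrals over super-level sets of $u^\bigstar_\mu$, while the left-hand side of $(\star)$ is $\int_0^s(|\nabla u^\bigstar_\mu|)^*_\mu$, i.e.\ the integral over the set of $\mu$-measure $s$ where $|\nabla u^\bigstar_\mu|$ is \emph{largest}. Because the radial profile $(-u^*_\mu)'(C_D|x|^D)\,D C_D|x|^{D-1}$ is not monotone, these sets need not coincide, and no amount of layer-cake book-keeping identifies them. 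Saying that this is ``where the bulk of the technical work sits'' names the obstacle but leaves the proof incomplete.

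The paper resolves this by not trying to match super-level sets of $|\nabla u^\bigstar_\mu|$ with those of $u^\bigstar_\mu$ at all. Instead it proves the almost-everywhere derivative estimate
\begin{equation*}
\bigl(D C_D^{1/D} t^{(D-1)/D}(-u^*_\mu)'(t)\bigr)^q \leq \frac{\dd}{\dd t}\int_{\{|u|>u^*_\mu(t)\}}|\nabla u|^q\,\dd\mu,
\end{equation*}
which already contains the $q$-th power. From this it derives the majorization $\int_0^t\phi^*(s)^q\,\dd s\leq\int_0^t(\nabla u)^{*q}_\mu(s)\,\dd s$ by a different mechanism: for an arbitrary measurable $E\subset(0,\infty)$ with $|E|=t$ one may take $E$ open, write $E=\bigcup_j(a_j,b_j)$, integrate the derivative estimate over each interval to bound $\int_{a_j}^{b_j}\phi^q$ by $\int_{\{u^*_\mu(b_j)<|u|\leq u^*_\mu(a_j)\}}|\nabla u|^q\,\dd\mu$, observe that these pullback sets are pairwise disjoint with $\mu$-measure at most $b_j-a_j$, and then apply the Hardy--Littlewood inequality to their union. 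Hardy's lemma with the nonincreasing weight $t^{q/p-1}$ then gives \eqref{thm:PS:ineq}. If you want to keep your $q=1$ formulation with the Calder\'on transfer, you still need this interval-decomposition argument (with $q=1$) to justify $(\star)$; the truncated $L^1$ inequality alone does not deliver it.

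One further small point: your plan to put $u^\bigstar_\mu$ into $V^1L^{p,q}(\Sigma,\mu)$ by ``standard mollification'' is plausible but should be checked, since convergence is needed in the weighted Lorentz gradient norm and the weight is singular at the origin. The paper instead mollifies the one-dimensional profile $(-u^*_\mu)'$ in $L^r(0,2L)$ for some $r>p$ and reconstructs approximants $u_n(x)=\int_{C_D|x|^D}^\infty\psi_n$, which cleanly reduces the convergence to a one-dimensional Lorentz estimate.
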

\begin{proof}
The proof of \eqref{thm:PS:ineq} follows the line of argument of Talenti (\cite{T:76, T:97}),
which combines the coarea formula with a suitable isoperimetric inequality
(see also the proof of \cite[Lemma~4.1]{CP:98}). The suitable isoperimetric inequality
in our case is that from \cite[Theorem~1.3]{CR-OS:16} (cf.~\cite{CR-O:13, CGPR-O:22}).
It reads as, for every measurable $E\subseteq\Rde$ such that $\mu(E\cap \Sigma) < \infty$,
\begin{equation*}
P_w(E; \Sigma)\geq D C_D^\frac1{D} \mu(E\cap \Sigma)^\frac{D - 1}{D}.
\end{equation*}
Here $P_w(E; \Sigma)$ is the weighted perimeter of $E$ in $\Sigma$ (its definition in full generality
can be found in \cite[p.~2977]{CR-OS:16}). Recall that, when $P_w(E; \Sigma) < \infty$, then
\begin{equation*}
P_w(E; \Sigma) = \int_{\partial^*E\cap\Sigma}w(x)\dd\mathcal H^{n-1}(x),
\end{equation*}
where $\partial^*E$ is the reduced boundary of $E$. Note that
\begin{equation*}
D C_D^\frac1{D} = \frac{P_w(B_1; \Sigma)}{\mu(B_1\cap\Sigma)^\frac{D - 1}{D}}
\end{equation*}
thanks to the homogeneity of $w$. Following Talenti, it can be shown that the function $u^*_\mu$
is locally absolutely continuous in $(0, \infty)$ and that, for a.e.~$t\in(0, \infty)$,
\begin{equation}\label{thm:PS:eq1}
0\leq ( D C_D^\frac1{D} t^{\frac{D - 1}{D}} (-u_\mu^*)'(t) )^q \leq \frac{\mathrm d}{\mathrm dt}
\int_{\{x\in\Sigma: |u(x)| > u^*_\mu(t)\}} |\nabla u(x)|^q \dd \mu(x).
\end{equation}
The validity of \eqref{thm:PS:u_bigstar_as_integral} is an immediate consequence of the local absolute
continuity of $u_\mu^*$ combined with the fact that $\lim_{t\to \infty} u_\mu^*(t) = 0$.

Let
\begin{equation}\label{thm:PS:def_phi}
\phi(t) = D C_D^\frac1{D} t^{\frac{D - 1}{D}} (-u_\mu^*)'(t),\ t\in(0, \infty).
\end{equation}
We claim that
\begin{equation}\label{thm:PS:eq2}
\int_0^t \phi^*(s)^q \dd s\leq \int_0^t (\nabla u)_\mu^*(s)^q \dd s \quad \text{for every $t\in(0, \infty)$}.
\end{equation}
Thanks to \cite[Chapter~2, Proposition~3.3]{BS}, it is sufficient to prove that
\begin{equation*}
\int_E \phi(s)^q \dd s \leq \int_0^t (\nabla u)_\mu^*(s)^q \dd s \quad \text{for every measurable $E\subseteq(0, \infty)$, $|E| = t$}.
\end{equation*}
Moreover, it follows from the (outer) regularity of the Lebesgue measure that we may assume that $E$ is open. Thus
\begin{equation*}
E = \bigcup_{j\in\mathcal J}(a_j, b_j),
\end{equation*}
where $\{(a_j, b_j)\}_{j\in \mathcal J}$ is a countable system of nonempty mutually disjoint open intervals in $(0, \infty)$. Using \eqref{thm:PS:eq1}, we have
\begin{align*}
\int_E \phi(s)^q \dd s \leq \sum_{j \in \mathcal J} \int_{a_j}^{b_j} \phi(s)^q \dd s &\leq \sum_{j \in \mathcal J}
\int_{\{x\in\Sigma: u^*_\mu(b_j) < |u(x)| \leq u^*_\mu(a_j) \}} |\nabla u(x)|^q \dd \mu(x) \\
&= \sum_{j \in \mathcal J} \int_{\{x\in\Sigma: u^*_\mu(b_j) < |u(x)| < u^*_\mu(a_j) \}} |\nabla u(x)|^q \dd \mu(x).
\end{align*}
Note that the sets $\{x\in\Sigma: u^*_\mu(b_j) < |u(x)| < u^*_\mu(a_j) \}$, $j\in \mathcal J$, are mutually disjoint,
and $\mu(\{x\in\Sigma: u^*_\mu(b_j) < |u(x)| < u^*_\mu(a_j) \}) \leq b_j - a_j$. Combining that with the
Hardy-Littlewood inequality (\cite[Chapter~2, Theorem~2.2]{BS}), we obtain
\begin{align*}
\sum_{j \in \mathcal J} \int_{\{x\in\Sigma: u^*_\mu(b_j) < |u(x)| < u^*_\mu(a_j) \}} |\nabla u(x)|^q \dd \mu(x) &=
\int_{\bigcup_{j\in\mathcal J}\{x\in\Sigma: u^*_\mu(b_j) < |u(x)| < u^*_\mu(a_j) \}} |\nabla u(x)|^q \dd \mu(x) \\
&\leq \int_0^{\sum_{j \in \mathcal J}(b_j - a_j)} (\nabla u)_\mu^*(s)^q \dd s \\
&= \int_0^t (\nabla u)_\mu^*(s)^q \dd s.
\end{align*}
Hence
\begin{equation*}
\int_E \phi(s)^q \dd s \leq \int_0^t (\nabla u)_\mu^*(s)^q \dd s.
\end{equation*}

Now, note that the function $(0, \infty) \ni s \mapsto s^{\frac{q}{p} - 1}$ is nonincreasing. Therefore, it follows from \eqref{thm:PS:eq2} combined with the Hardy lemma \cite[Chapter~2, Proposition~3.6]{BS} that
\begin{equation}\label{thm:PS:eq3}
\int_0^\infty \phi^*(t)^q t^{\frac{q}{p} - 1}\dd t \leq \int_0^\infty (\nabla u)_\mu^*(t)^q t^{\frac{q}{p} - 1}\dd t.
\end{equation}
Note that
\begin{equation*}
|\nabla u_\mu^\bigstar|(x) = (-u_\mu^*)'(C_D |x|^D) D C_D |x|^{D-1} = \phi(C_D |x|^D) \quad \text{for a.e.~$x\in\Rde$}.
\end{equation*}
Hence
\begin{equation}\label{thm:PS:eq4}
(\nabla u_\mu^\bigstar)_\mu^*(t) = \phi^*(t) \quad \text{for every $t\in(0, \infty)$}.
\end{equation}
Therefore, \eqref{thm:PS:ineq} follows from \eqref{thm:PS:eq3} and \eqref{thm:PS:eq4}.

It remains to prove that $u_\mu^\bigstar \in V^1L^{p,q}(\Sigma, \mu)$. Since $u$ has compact support in $\Rde$, both $u_\mu^*$ and  $(-u_\mu^*)'$ vanish outside $[0, L]$, where $L = \mu(\spt u \cap \Sigma)$. Truncating the function $(-u_\mu^*)'$ if necessary, we may assume that $(-u_\mu^*)' \in L^\infty(0, \infty)$. Therefore, we have $(-u_\mu^*)' \in L^r(0, \infty)$ for every $r\in [1, \infty]$. Fix (any) finite $r > p$, and note that, using the embedding relations between Lorentz spaces and Lebesgue spaces (see~\cite{PKJF:13}), we have
\begin{equation*}
\|t^{\frac1{p} - \frac1{q}} f^*(t)\|_{L^q(0, 2L)} \leq C_E \|f\|_{L^r(0, 2L)}
\end{equation*}
for every function $f\in L^r(0,2L)$, vanishing outside $(0,2L)$, where the constant $C_E$ is independent of $f$. By the standard mollification argument, there are smooth nonnegative functions $\{\psi_n\}_{n = 1}^\infty\subseteq C_c^\infty(\Re)$ vanishing in $(2L, \infty)$ such that $\lim_{n\to\infty}\|(-u_\mu^*)' - \psi_n\|_{L^r(0, 2L)} = 0$.

Finally, for every $n\in\Ne$, we define the function $u_n$ as
\begin{equation*}
u_n(x) = \int_{C_D |x|^D}^\infty \psi_n(t) \dd t,\ x\in\Rde.
\end{equation*}
Note that $u_n\in \mathcal C_c^1(\Rde)$. We claim that $\lim_{n\to\infty}\|\nabla u_\mu^\bigstar - \nabla u_n\|_{p,q,\mu} = 0$. Indeed, setting $g_n(t) = t^\frac{D-1}{D}((-u_\mu^*)'(t) - \psi_n(t))$ and using \eqref{thm:PS:eq4}, we have
\begin{align*}
\|\nabla u_\mu^\bigstar - \nabla u_n\|_{p,q,\mu} &= DC_D^\frac1{D}\|t^{\frac1{p} - \frac1{q}} g_n^*(t)\|_{L^q(0,2L)} \\
&\leq DC_D^\frac1{D} (2L)^{\frac{D - 1}{D}}\|t^{\frac1{p} - \frac1{q}} \big( (-u_\mu^*)' - \psi_n \big)^*(t)\|_{L^q(0,2L)} \\
&\leq DC_D^\frac1{D} (2L)^{\frac{D - 1}{D}} C_E \|(-u_\mu^*)' - \psi_n\|_{L^r(0, 2L)}.
\end{align*}
Since the right-hand side goes to $0$ as $n\to\infty$, we obtain $u_\mu^\bigstar \in V^1L^{p,q}(\Sigma, \mu)$.
\end{proof}

\begin{rem}\label{rem:approximation_sequence}
For future reference, note that the approximation sequence $\{u_n\}_{n = 1}^\infty \subseteq C_c^1(\Rde)$ of radially nonincreasing functions from the preceding proof satisfies not only $\lim_{n\to\infty}\|\nabla u_\mu^\bigstar - \nabla u_n\|_{p,q,\mu}$ but also $\lim_{n\to\infty} \|u_\mu^\bigstar  - u_n\|_{s,q,\mu}$ for every $s < \infty$. Indeed, using the same notation as in the proof, we have
\begin{align*}
\|u_\mu^\bigstar  - u_n\|_{s,q,\mu} &\leq \Big\| t^{\frac1{s} - \frac1{q}} \int_t^{2L} \big| (-u_\mu^*)'(\tau) - \psi_n(\tau) \big| \dd \tau \, \chi_{(0, 2L)}(t) \Big\|_{L^q(0, \infty)} \\
&\leq \int_0^{2L} \big| (-u_\mu^*)'(\tau) - \psi_n(\tau) \big| \dd \tau \, \big\| t^{\frac1{s} - \frac1{q}}\chi_{(0, 2L)}(t) \big\|_{L^q(0, \infty)} \\
&\leq \|(-u_\mu^*)' - \psi_n\|_{L^r(0, 2L)} (2L)^{1 - \frac1{r}} \big\| t^{\frac1{s} - \frac1{q}}\chi_{(0, 2L)}(t) \big\|_{L^q(0, \infty)},
\end{align*}
where the right-hand side goes to $0$ as $n\to \infty$.
\end{rem}

Now, we are finally in a position to prove the optimal Sobolev-Lorentz inequality in $\Sigma$.
\begin{thm}\label{thm:Sob-Lor_emb}
Let $1\leq q\leq p < D$. Set $p^* = Dp/(D-p)$. We have
\begin{equation}\label{thm:Sob-Lor_emb:ineq}
\|u\|_{p^*, q,\mu} \leq \frac{p}{(D - p)\mu(B_1\cap \Sigma)^\frac1{D}} \|\nabla u\|_{p,q,\mu}
\quad \text{for every $u\in \mathcal C^1_c(\Rde)$}.
\end{equation}
\end{thm}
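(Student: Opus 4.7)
The plan is to reduce the inequality to a classical one-dimensional weighted Hardy inequality by exploiting the radial rearrangement tools supplied by Theorem~\ref{thm:PS}. Since $u$ and $u_\mu^\bigstar$ are $\mu$-equimeasurable, $\|u\|_{p^*,q,\mu}=\|u_\mu^\bigstar\|_{p^*,q,\mu}$, and the Pólya--Szegő inequality \eqref{thm:PS:ineq} gives $\|\nabla u_\mu^\bigstar\|_{p,q,\mu}\leq\|\nabla u\|_{p,q,\mu}$, so it suffices to prove \eqref{thm:Sob-Lor_emb:ineq} with $u_\mu^\bigstar$ in place of $u$. Writing $v(t)=u_\mu^*(t)$ and recalling $\phi(t)=DC_D^{1/D}t^{(D-1)/D}(-v'(t))$ from \eqref{thm:PS:def_phi}, the identity \eqref{thm:PS:u_bigstar_as_integral} becomes
\begin{equation*}
v(t)=\frac{1}{DC_D^{1/D}}\int_t^\infty s^{-(D-1)/D}\phi(s)\,\dd s,
\end{equation*}
and since $v$ is nonincreasing, $\|u_\mu^\bigstar\|_{p^*,q,\mu}^q=\int_0^\infty t^{q/p^*-1}v(t)^q\,\dd t$.

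Next, the key analytic input is the classical Hardy inequality
\begin{equation*}
\left(\int_0^\infty t^a F(t)^q\,\dd t\right)^{\!1/q}\leq\frac{q}{a+1}\left(\int_0^\infty t^{a+q}f(t)^q\,\dd t\right)^{\!1/q},\qquad a+1>0,
\end{equation*}
valid for nonnegative $f$ with $F(t)=\int_t^\infty f(s)\,\dd s$, which I would apply with $a=q/p^*-1$ (so that $a+1=q/p^*>0$, thanks to $p<D$ and $q\geq 1$) and $f(s)=s^{-(D-1)/D}\phi(s)$. Exploiting the scaling identity $1/p^*=1/p-1/D$, the routine computations $a+q-q(D-1)/D=q/p-1$, $q/(a+1)=p^*$, and $p^*/D=p/(D-p)$ then deliver
\begin{equation*}
\|u_\mu^\bigstar\|_{p^*,q,\mu}\leq\frac{p}{(D-p)\,C_D^{1/D}}\left(\int_0^\infty t^{q/p-1}\phi(t)^q\,\dd t\right)^{\!1/q}.
\end{equation*}

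To close the argument, I would replace $\phi$ on the right-hand side by its nonincreasing rearrangement $\phi^*$: since $q\leq p$, the weight $t^{q/p-1}$ is nonincreasing, so the Hardy--Littlewood inequality together with the identity $(\phi^q)^*=(\phi^*)^q$ yields
\begin{equation*}
\int_0^\infty t^{q/p-1}\phi(t)^q\,\dd t\leq\int_0^\infty t^{q/p-1}\phi^*(t)^q\,\dd t,
\end{equation*}
and by \eqref{thm:PS:eq4} the right-hand side equals $\|\nabla u_\mu^\bigstar\|_{p,q,\mu}^q$. Recalling $C_D=\mu(B_1\cap\Sigma)$ from \eqref{unit_ball_in_sigma_measure} then yields \eqref{thm:Sob-Lor_emb:ineq}.

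The main place where some care is required is the rigorous application of the Hardy inequality, whose standard integration-by-parts derivation needs the boundary contribution $t^{a+1}F(t)^q$ to vanish at both $0$ and $\infty$. Rather than arguing this directly for the generic $u\in\mathcal C_c^1(\Rde)$, I would first pass to the smoother radially nonincreasing approximating sequence $\{u_n\}_{n=1}^\infty\subseteq\mathcal C_c^1(\Rde)$ supplied by Remark~\ref{rem:approximation_sequence}: for those $u_n$ the corresponding $\phi_n$ is smooth, bounded, and compactly supported in $(0,\infty)$, so every boundary term vanishes trivially, and the norm convergences recorded in that remark then let me pass to the limit.
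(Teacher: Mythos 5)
Your proof is correct and follows essentially the same route as the paper: pass to $u_\mu^\bigstar$ via Pólya--Szegő, apply the one-dimensional weighted Hardy inequality to the representation of $u_\mu^*$, then use Hardy--Littlewood and the identity $(\nabla u_\mu^\bigstar)_\mu^* = \phi^*$. The only minor difference is that your closing approximation argument to justify the boundary terms in Hardy is unnecessary — the Hardy inequality \eqref{thm:Sob-Lor_emb:Hardy} is a classical result valid for all nonnegative measurable $f$, which is how the paper invokes it.
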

\begin{proof}
First, recall that the following Hardy inequality, which holds for every nonnegative measurable function $f$ on $(0, \infty)$ (see \cite[Theorem~330, p.~245]{HLP52} or, e.g., \cite[Section~1.3.1]{Mabook}, \cite{M:72}):
\begin{equation}\label{thm:Sob-Lor_emb:Hardy}
\Big( \int_0^\infty \Big(t^{\frac1{p^*} - \frac1{q}} \int_t^\infty f(s) \dd s\Big)^q \dd t
\Big)^\frac1{q} \leq p^* \Big( \int_0^\infty \Big(t^{1 + \frac1{p^*} - \frac1{q}} f(t)\Big)^q \dd t \Big)^\frac1{q}.
\end{equation}

Second, using \eqref{thm:PS:u_bigstar_as_integral} and \eqref{thm:Sob-Lor_emb:Hardy}, we have
\begin{align*}
\|u\|_{p^*,q,\mu} &=  \|u_\mu^\bigstar\|_{p^*,q,\mu} = \Big\| t^{\frac1{p^*} - \frac1{q}} \int_t^\infty (-u_\mu^*)'(s) \dd s \Big\|_{L^q(0, \infty)} \\
&\leq p^* \Big\| t^{1 + \frac1{p^*} - \frac1{q}} (-u_\mu^*)'(t) \Big\|_{L^q(0, \infty)} = p^* \Big\| t^{\frac1{p^*} - \frac1{q} + \frac1{D}} ( -u_\mu^*)'(t) t^{\frac{D - 1}{D}} \Big\|_{L^q(0, \infty)} \\
&= p^* \Big\| t^{\frac1{p} - \frac1{q}} (-u_\mu^*)'(t) t^{\frac{D - 1}{D}} \Big\|_{L^q(0, \infty)}
\end{align*}
for every $u\in\mathcal C_c^1(\Rde)$. Furthermore, thanks to the Hardy-Littlewood inequality (\cite[Chapter~2, Theorem~2.2]{BS}), we have
\begin{equation*}
\Big\| t^{\frac1{p} - \frac1{q}} (-u_\mu^*)'(t) t^{\frac{D - 1}{D}} \Big\|_{L^q(0, \infty)} \leq D^{-1} C_D^{-\frac1{D}} \Big\| t^{\frac1{p} - \frac1{q}} \phi^*(t) \Big\|_{L^q(0, \infty)},
\end{equation*}
where the function $\phi$ is defined by \eqref{thm:PS:def_phi}. Hence
\begin{equation}\label{thm:Sob-Lor_emb:Sob_with_phi}
\|u\|_{p^*,q,\mu} \leq p^*D^{-1} C_D^{-\frac1{D}} \Big\| t^{\frac1{p} - \frac1{q}} \phi^*(t) \Big\|_{L^q(0, \infty)}
\end{equation}
for every $u\in\mathcal C_c^1(\Rde)$.

Finally, combining \eqref{thm:Sob-Lor_emb:Sob_with_phi} with \eqref{thm:PS:eq4}, we obtain
\begin{equation*}
\|u\|_{p^*,q,\mu} \leq p^* D^{-1} C_D^{-\frac1{D}} \|\nabla u_\mu^\bigstar\|_{p,q,\mu}
\end{equation*}
for every $u\in\mathcal C_c^1(\Rde)$. Hence, combining that with \eqref{thm:PS:ineq}, we have
\begin{equation*}
\|u\|_{p^*,q,\mu} \leq p^* D^{-1} C_D^{-\frac1{D}} \|\nabla u\|_{p,q,\mu} \quad \text{for every $u\in\mathcal C_c^1(\Rde)$}. \qedhere
\end{equation*}
\end{proof}

\begin{rem}\label{rem:optimal_constant}
By straightforwardly modifying the maximizing sequence introduced by Alvino in \cite{A:77} (cf.~\cite{CRT:18}), it is not hard to prove that the constant in \eqref{thm:Sob-Lor_emb:ineq} is optimal. In other words,
\begin{equation*}
\|E\| = \frac{p}{(D - p)\mu(B_1\cap \Sigma)^\frac1{D}},
\end{equation*}
where $\|E\|$ is the (operator) norm of the embedding operator $E\colon V^1L^{p,q}(\Sigma, \mu) \to L^{p^*, q}(\Sigma, \mu)$.
\end{rem}


\section{Quantitative aspects of the non-compactness}\label{sec:bern}

We start with two auxiliary propositions.
\begin{prop}\label{prop:extremal_functions_constant_near_zero_radially_symmetric}
Let $1\leq q\leq p < D$. Let $E\colon V^1L^{p,q}(\Sigma, \mu) \to L^{p^*, q}(\Sigma, \mu)$ be
the embedding operator, where $p^*$ is as in Theorem~\ref{thm:Sob-Lor_emb}. For $0 < r < R$, set
\begin{equation*}
\mathcal F_{r,R}  = \{u\in\mathcal C^1_c(\Rde): u=u_\mu^\bigstar, \spt u \subseteq B_R, \nabla u\equiv0\ \text{in}\ B_r\}.
\end{equation*}
Then, for every $R > 0$,
\begin{equation}\label{prop:extremal_functions_constant_near_zero_radially_symmetric_equality}
\|E\| = \sup_{\substack{u\in \mathcal F_{r,R} \\ r\in(0, R)}} \frac{\|u\|_{p^*,q,\mu}}{\|\nabla u\|_{p,q,\mu}}.
\end{equation}
\end{prop}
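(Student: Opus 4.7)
The bound $\sup \leq \|E\|$ is immediate, since every element of $\mathcal F_{r,R}$ lies in $\mathcal C_c^1(\Rde) \subseteq V^1L^{p,q}(\Sigma,\mu)$. For the reverse direction my plan is to start from an arbitrary $u \in \mathcal C_c^1(\Rde)$ and construct a sequence $\{w_n\}$ with $w_n \in \mathcal F_{r_n, R}$ for some $r_n \in (0, R)$ whose Sobolev--Lorentz ratios approach that of $u$; letting $u$ run along a maximizing sequence for $\|E\|$ will then yield the claim.

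The construction will rely on three reductions. First, Theorem~\ref{thm:PS} allows me to replace $u$ by $u^\bigstar_\mu$ at no cost to the ratio, since $\|u^\bigstar_\mu\|_{p^*,q,\mu} = \|u\|_{p^*,q,\mu}$ while $\|\nabla u^\bigstar_\mu\|_{p,q,\mu} \leq \|\nabla u\|_{p,q,\mu}$. Second, I will refine the mollification step from the proof of Theorem~\ref{thm:PS} so that the smooth approximants $\tilde\psi_n \in \mathcal C_c^\infty(\R)$ of $(-u_\mu^*)'$ additionally vanish on $(0, 1/n)$. Because, after the truncation carried out in that proof, $(-u_\mu^*)' \in L^\infty(0, \infty)$ and thus belongs to every $L^r(0, 2L)$, dominated convergence gives
\begin{equation*}
\|(-u_\mu^*)' \chi_{(0, 1/n)}\|_{L^r(0, 2L)} \to 0,
\end{equation*}
so zeroing $(-u_\mu^*)'$ out on $(0, 1/n)$ before mollifying preserves the convergence in $L^r(0, 2L)$. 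The associated primitives
\begin{equation*}
\tilde u_n(x) = \int_{C_D|x|^D}^\infty \tilde \psi_n(t) \dd t
\end{equation*}
are then radially nonincreasing members of $\mathcal C_c^1(\Rde)$ equal to their own rearrangement, constant on $B_{r_n}$ with $r_n = (n C_D)^{-1/D}$, and the calculations of Theorem~\ref{thm:PS} and Remark~\ref{rem:approximation_sequence} give $\tilde u_n \to u^\bigstar_\mu$ in $L^{p^*, q}(\Sigma, \mu)$ together with $\nabla \tilde u_n \to \nabla u^\bigstar_\mu$ in $L^{p, q}(\Sigma, \mu)$.

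Third, the $\alpha$-homogeneity of $w$ gives $\mu(\lambda E) = \lambda^D \mu(E)$, which propagates through the distribution function to yield, for every $v \in \mathcal C_c^1(\Rde)$ and $\lambda > 0$,
\begin{equation*}
\|v(\cdot/\lambda)\|_{p^*, q, \mu} = \lambda^{D/p^*} \|v\|_{p^*, q, \mu}, \qquad \|\nabla v(\cdot/\lambda)\|_{p, q, \mu} = \lambda^{D/p - 1} \|\nabla v\|_{p, q, \mu}.
\end{equation*}
Since $1/p^* = 1/p - 1/D$, the ratio is scale invariant. Writing $\spt \tilde u_n \subseteq \overline{B}_{R_n}$ and setting $\lambda_n = R/(2 R_n)$, the rescaled functions $w_n(x) = \tilde u_n(x/\lambda_n)$ lie in $\mathcal F_{\lambda_n r_n, R}$ and inherit the ratio of $\tilde u_n$, which converges to the ratio of $u^\bigstar_\mu$. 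Combining the three reductions closes the argument.

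The main obstacle is the second reduction: the refined mollification must simultaneously preserve smooth compact support, radial monotonicity of the primitive, and the convergence statements of Theorem~\ref{thm:PS} and Remark~\ref{rem:approximation_sequence}. The dominated convergence argument above handles all three. The remaining reductions are standard consequences of the P\'olya--Szeg\H{o} principle and the scaling invariance forced by the $\alpha$-homogeneity of $w$.
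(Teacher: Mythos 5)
Your proposal is correct and takes essentially the same route as the paper: reduce to radial rearrangements via the Pólya--Szeg\H{o} inequality, kill the gradient near the origin by approximation, and use the scale invariance of the ratio coming from the $\alpha$-homogeneity of $w$. The only organizational difference is the order of the last two reductions (the paper scales into $B_R$ first, then cuts off near the origin, whereas you cut off first and scale last; you also fuse the smoothing and cutoff into a single refined mollification), which cleanly guarantees $\spt w_n \subseteq B_{R/2}$ without having to re-scale after the approximation.
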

\begin{proof}
In view of Theorem~\ref{thm:PS},
\begin{equation*}
\|E\| = \sup_{u\in \mathcal C^1_c(\Rde)} \frac{\|u_\mu^\bigstar\|_{p^*,q,\mu}}{\|\nabla u_\mu^\bigstar\|_{p,q,\mu}}.
\end{equation*}
First, we claim that
\begin{equation}\label{prop:extremal_functions_constant_near_zero_radially_symmetric:sup_only_radially_symmetric_compactly_supported}
\|E\| = \sup_{\substack{u\in \mathcal C^1_c(\Rde) \\ \spt u_\mu^\bigstar\subseteq B_R}} \frac{\|u_\mu^\bigstar\|_{p^*,q,\mu}}{\|\nabla u_\mu^\bigstar\|_{p,q,\mu}}.
\end{equation}
To that end, it is  sufficient to prove that, for every $u\in \mathcal C^1_c(\Rde)$,
\begin{equation}\label{prop:extremal_functions_constant_near_zero_radially_symmetric:eq1}
\frac{\|u_\mu^\bigstar\|_{p^*,q,\mu}}{\|\nabla u_\mu^\bigstar\|_{p,q,\mu}} \leq \sup_{\substack{v\in
\mathcal C^1_c(\Rde) \\ \spt v_\mu^\bigstar\subseteq B_R}}
\frac{\|v_\mu^\bigstar\|_{p^*,q,\mu}}{\|\nabla v_\mu^\bigstar\|_{p,q,\mu}}.
\end{equation}
Let $u\in \mathcal C^1_c(\Rde)$. Since $u$ is compactly supported in $\Rde$, $u_\mu^\bigstar$ is supported in $B_{\tilde{R}}$ for some $\tilde{R}>0$. Set $\kappa = \tilde{R}/R$ and let $u_\kappa$ be the function defined as
\begin{equation*}
u_{\kappa}(x) = u(\kappa x),\ x\in\Rde.
\end{equation*}
Clearly $u_{\kappa}\in \mathcal C^1_c(\Rde)$. We have
\begin{equation}\label{prop:extremal_functions_constant_near_zero_radially_symmetric:eq2}
(u_{\kappa})_\mu^*(t) = u_\mu^*(\kappa^D t) \quad \text{for every $t>0$}
\end{equation}
thanks to the homogeneity of $w$. Indeed,
\begin{align*}
  (u_{\kappa})_{*\mu}(t)
  &=
  \int_{\{x\in \Sigma:|u_{\kappa}(x)|>t\}}w(x)\dd x
  =
  \int_{\{x\in \Sigma:|u(\kappa x)|>t\}}w(x)\dd x \\
	&=
	\kappa^{-d}\int_{\{x\in \Sigma:|u(x)|>t\}}w(x/\kappa)\dd x
	=
  \kappa^{-d-\al} \int_{\{x\in \Sigma:|u(x)|>t\}}w(x)\dd x \\
	&=
	\kappa^{-D}u_{*\mu}(t)
\end{align*}
for every $t>0$, where we used \eqref{Wfunc} in the second to last equality. Hence \eqref{prop:extremal_functions_constant_near_zero_radially_symmetric:eq2} is true. Similarly, it is easy to see that
\begin{equation}\label{prop:extremal_functions_constant_near_zero_radially_symmetric:eq3}
(\nabla (u_{\kappa})_\mu^\bigstar)_\mu^*(t) = \kappa(\nabla u_\mu^\bigstar)_\mu^*(\kappa^D t) \quad \text{for every $t>0$}.
\end{equation}
By plugging \eqref{prop:extremal_functions_constant_near_zero_radially_symmetric:eq2} and \eqref{prop:extremal_functions_constant_near_zero_radially_symmetric:eq3} in the definition of $\|\cdot\|_{p^*,q,\mu}$ and $\|\cdot\|_{p,q,\mu}$, respectively, we observe that
\begin{equation*}
\frac{\|(u_{\kappa})_\mu^\bigstar\|_{p^*,q,\mu}}{\|\nabla (u_{\kappa})_\mu^\bigstar\|_{p,q,\mu}} = \frac{\kappa^{-\frac{D}{p^*}}\|u_\mu^\bigstar\|_{p^*,q,\mu}}{\kappa^{1 - \frac{D}{p}}\|\nabla u_\mu^\bigstar\|_{p,q,\mu}} = \frac{\|u_\mu^\bigstar\|_{p^*,q,\mu}}{\|\nabla u_\mu^\bigstar\|_{p,q,\mu}}.
\end{equation*}
Furthermore, note that \eqref{prop:extremal_functions_constant_near_zero_radially_symmetric:eq2} implies that
\begin{equation*}
\spt (u_{\kappa})_\mu^\bigstar \subseteq B_{\tilde{R}/\kappa} = B_{R}.
\end{equation*}
Hence \eqref{prop:extremal_functions_constant_near_zero_radially_symmetric:eq1} follows from the last two observations.

Next, we claim that
\begin{equation}\label{prop:extremal_functions_constant_near_zero_radially_symmetric:sup_only_radially_symmetric_compactly_supported_constant_near_zero}
\sup_{\substack{u\in \mathcal C^1_c(\Rde) \\ \spt u_\mu^\bigstar\subseteq B_R}} \frac{\|u_\mu^\bigstar\|_{p^*,q,\mu}}{\|\nabla u_\mu^\bigstar\|_{p,q,\mu}} = \sup_{\substack{u\in \mathcal F_{r,R} \\ r\in(0, R)}} \frac{\|u\|_{p^*,q,\mu}}{\|\nabla u\|_{p,q,\mu}}.
\end{equation}
To that end, it is sufficient to show that
\begin{equation}\label{prop:extremal_functions_constant_near_zero_radially_symmetric:eq4}
\sup_{\substack{u\in \mathcal C^1_c(\Rde) \\ \spt u_\mu^\bigstar\subseteq B_R}} \frac{\|u_\mu^\bigstar\|_{p^*,q,\mu}}{\|\nabla u_\mu^\bigstar\|_{p,q,\mu}} \leq \sup_{\substack{u\in \mathcal F_{r,R} \\ r\in(0, R)}} \frac{\|u\|_{p^*,q,\mu}}{\|\nabla u\|_{p,q,\mu}}.
\end{equation}
Let $u\in \mathcal C^1_c(\Rde)$ be such that $\spt u_\mu^\bigstar\subseteq B_R$. Thanks to \eqref{thm:PS:u_bigstar_as_integral}, we have
\begin{equation*}
u^\bigstar_\mu(x) = \int_{C_D|x|^D}^\infty (-u_\mu^*)'(t) \dd t \quad \text{for every $x\in\Rde$}.
\end{equation*}
Approximating $u^\bigstar_\mu$ as in the proof of Theorem~\ref{thm:PS} (see also Remark~\ref{rem:approximation_sequence}) if necessary, we may assume that $u^\bigstar_\mu\in \mathcal C^1_c(\Rde)$. Let $\eta_n\in \mathcal C(0,\infty)$, $n\in\Ne$, be a sequence of cutoff functions such that
\begin{equation*}
0\leq \eta_n\leq1,\quad \eta_n \equiv 0\ \text{in $\Big(0, \frac1{n+1}\Big]$},\quad\text{and } \eta_n \equiv 1 \ \text{in $\Big[\frac1{n}, \infty \Big)$}.
\end{equation*}
Let $u_n$, $n\in\Ne$, be the sequence of functions defined as
\begin{equation*}
u_n(x) = \int_{C_D|x|^D}^\infty (-u_\mu^*)'(t)\eta_n(t) \dd t,\ x\in\Rde.
\end{equation*}
Then $u_n\in\mathcal C_c^1(\Rde)$ and $u_n=(u_n)_\mu^\bigstar$. Note that each function $u_n$ is constant in a ball 
centered at the origin. Hence $\nabla u_n\equiv0$ in this ball. Furthermore, it is easy to see that
\begin{equation}\label{prop:extremal_functions_constant_near_zero_radially_symmetric:eq5}
\lim_{n\to\infty} \|u_n\|_{p^*,q,\mu} = \|u_\mu^\bigstar\|_{p^*,q,\mu}
\end{equation}
and
\begin{equation}\label{prop:extremal_functions_constant_near_zero_radially_symmetric:eq6}
\lim_{n\to\infty} \|\nabla u_n\|_{p,q,\mu} = \|\nabla u_\mu^\bigstar\|_{p,q,\mu}.
\end{equation}
Combining \eqref{prop:extremal_functions_constant_near_zero_radially_symmetric:eq5} and \eqref{prop:extremal_functions_constant_near_zero_radially_symmetric:eq6} with our observations about the functions $u_n$, we obtain \eqref{prop:extremal_functions_constant_near_zero_radially_symmetric:eq4}.

Finally, \eqref{prop:extremal_functions_constant_near_zero_radially_symmetric_equality} follows from \eqref{prop:extremal_functions_constant_near_zero_radially_symmetric:sup_only_radially_symmetric_compactly_supported} and \eqref{prop:extremal_functions_constant_near_zero_radially_symmetric:sup_only_radially_symmetric_compactly_supported_constant_near_zero}.
\end{proof}

\begin{prop}\label{prop:almost_extremal_system}
Let $1\leq q\leq p < D$. Let $E\colon V^1L^{p,q}(\Sigma, \mu) \to L^{p^*, q}(\Sigma, \mu)$ be the embedding operator, where $p^*$ is as in Theorem~\ref{thm:Sob-Lor_emb}. For every $0 < \lambda < \|E\|$, $\varepsilon_1 > 0$, and $\varepsilon_2 > 0$, there is a sequence of functions $\{u_j\}_{j = 1}^\infty\subseteq \mathcal C^1_c(\Rde)$ such that
\begin{enumerate}[(i)]
	\item $\|u_j\|_{p^*,q,\mu} = \lambda$ and $\|\nabla u_j\|_{p,q,\mu} = 1$ for every $j\in\Ne$.
	\item $\spt u_{j+1}\subsetneq \spt u_j$ and $\spt \nabla u_j \subseteq \spt u_j\setminus \spt u_{j + 1}$ for every $j\in\Ne$.
	\item $u_j = (u_j)_\mu^\bigstar$ for every $j\in\Ne$.
		\item $\spt u_j \to \emptyset$ as $j \to \infty$.
	\item For every sequence $\{\alpha_j\}_{j = 1}^\infty$ we have
	\begin{equation}\label{prop:almost_extremal_system_superadditivity}
	\Big\| \sum_{j = 1}^\infty \alpha_j u_j \Big\|_{p^*,q,\mu} \geq \Bigg( \frac{\lambda}{1 + \varepsilon_1} - \varepsilon_2 \Bigg)\Big( \sum_{j = 1}^\infty |\alpha_j|^q \Big)^{\frac1{q}}.
	\end{equation}
\end{enumerate}
\end{prop}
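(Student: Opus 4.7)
The plan is to construct the sequence $\{u_j\}$ inductively, using Proposition~\ref{prop:extremal_functions_constant_near_zero_radially_symmetric} at each step to furnish near-extremal radially symmetric building blocks, and then to verify the superadditivity~(v) by an explicit computation exploiting the nested structure. In Stage~I, I fix a decreasing sequence $R_1 > R_2 > \cdots \to 0$ and, at step~$j$, apply Proposition~\ref{prop:extremal_functions_constant_near_zero_radially_symmetric} with radius $R_j$ to produce a candidate $\tilde u_j \in \mathcal F_{r, R_j}$ whose ratio $\|\tilde u_j\|_{p^*,q,\mu}/\|\nabla \tilde u_j\|_{p,q,\mu}$ exceeds $\lambda$. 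This ratio is invariant under scalar multiplication but varies continuously if I perturb $\tilde u_j$ by adding a radially symmetric function of much smaller ratio supported deep inside~$B_r$ (for instance a smooth spike concentrated at the origin), so an intermediate-value argument yields a function $u_j \in \mathcal F_{r_j, R_j}$ whose ratio is exactly~$\lambda$; scaling then enforces $\|u_j\|_{p^*,q,\mu} = \lambda$ and $\|\nabla u_j\|_{p,q,\mu} = 1$. Choosing $R_{j+1} < r_j$ secures~(ii) and~(iv), while~(iii) is built into the defining class $\mathcal F_{r_j, R_j}$.

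For Stage~II, let $M_j := u_j(0) = \max u_j$, $A_j := \spt u_j \setminus \spt u_{j+1}$, and $v := \sum_j \alpha_j u_j$. The nesting forces the $A_j$'s to be pairwise disjoint, and for $x \in A_j$ one has $u_k(x) = M_k$ for every $k < j$ (since $A_j \subseteq \spt u_j \subseteq B_{r_k}$, the constancy region of $u_k$), $u_k(x) = 0$ for $k > j$, and $u_j(x) \in [0, M_j]$. Hence $v = \alpha_j u_j + C_j$ on $A_j$, with $C_j := \sum_{k < j} \alpha_k M_k$. When all $\alpha_j \geq 0$, $v$ is itself radially nonincreasing, and with $S_j := \sum_{k \leq j} \alpha_k M_k$ the superlevel set $\{v > \tau\}$ for $\tau \in [S_{j-1}, S_j)$ coincides with $\{u_j > (\tau - S_{j-1})/\alpha_j\}$. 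Plugging into \eqref{prel:Lorentz_norm_distributional_function}, changing variables $s = (\tau - S_{j-1})/\alpha_j$, and using $\tau \geq \alpha_j s$, a direct computation yields
\begin{equation*}
\|v\|_{p^*,q,\mu}^q \geq \sum_j \alpha_j^q \|u_j\|_{p^*,q,\mu}^q = \lambda^q \sum_j \alpha_j^q,
\end{equation*}
which is~(v) with $\varepsilon_1 = \varepsilon_2 = 0$ in the positive-coefficient case.

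For Stage~III, the general-coefficient case, the offsets $C_j$ may suffer sign cancellations. My plan is to arrange already during Stage~I that $R_{j+1}$ is chosen small enough at step $j+1$ — equivalently, $M_{j+1}$ large enough — so that $|C_j|$ is negligible compared to $|\alpha_j| M_j$ on the sub-annulus of $A_j$ carrying most of the Lorentz mass of~$u_j$, uniformly over $\ell^q$-normalized sequences $\{\alpha_j\}$. Combined with the absolute continuity of the Lorentz norm (which absorbs finitely many initial indices where the domination has not yet kicked in), this produces the multiplicative loss $1/(1+\varepsilon_1)$ and the additive correction $\varepsilon_2$ needed for~(v).

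The main obstacle will be Stage~III: the pointwise bound $|v| \geq |\alpha_j u_j| - |C_j|$ on $A_j$ is too crude, so I must instead track the distribution function $v_{*\mu}$ directly and argue that the offsets $C_j$ merely translate it by a controlled amount absorbable into the $1/(1+\varepsilon_1)$ factor. The delicate calibration is choosing the growth rate of $\{M_j\}$ (equivalently, the shrinkage rate of $\{R_j\}$) so that this uniform control holds across all admissible $\ell^q$-sequences; this is where the interplay between the size of the extremal functions $u_j$ and the size of their supports, emphasized in the abstract, is put to work.
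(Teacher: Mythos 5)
Your Stage~I is essentially the paper's inductive construction via Proposition~\ref{prop:extremal_functions_constant_near_zero_radially_symmetric}, and Stage~II (nonnegative coefficients) is a correct, if somewhat more elaborate, version of the stratification estimate the paper uses. Two small remarks on Stage~I: adding a ``spike concentrated at the origin'' to a function that is constant on $B_r$ takes you out of $\mathcal F_{r,R}$, so the intermediate-value step is better run by interpolating between a function in $\mathcal F_{r,R}$ with ratio above $\lambda$ and one with ratio below $\lambda$ (the ratio is then continuous on the segment); and, as you note, the scaling observation makes the exact normalization $\|u_j\|_{p^*,q,\mu}=\lambda$, $\|\nabla u_j\|_{p,q,\mu}=1$ routine.

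The real problem is Stage~III, which you yourself flag as the main obstacle, and the repair you sketch does not work. You propose to choose the peak heights $M_j$ so that $|C_j|=\bigl|\sum_{k<j}\alpha_k M_k\bigr|$ is negligible compared with $|\alpha_j|M_j$ \emph{uniformly over $\ell^q$-normalized sequences $\{\alpha_j\}$}. This is impossible: fix $\alpha_1=1$ and let $\alpha_j\to0$ (keeping $\|\alpha\|_{\ell^q}\asymp1$); then $|C_j|\gtrsim M_1$ while $|\alpha_j|M_j$ can be made arbitrarily small, so the ratio $|C_j|/(|\alpha_j|M_j)$ is unbounded no matter how the $M_j$ grow. (There is also an index slip: $M_{j+1}$ does not enter $C_j$ at all, so tuning $R_{j+1}$ cannot control the offset on $A_j$.) The pointwise comparison of $C_j$ with $\alpha_j u_j$ is therefore the wrong place to hide the error; the cancellations have to be absorbed at the level of Lorentz norms, not of function values.

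The paper avoids the offsets $C_j$ entirely by a different decomposition. Instead of restricting $v=\sum\alpha_ju_j$ to the sets $A_j=\spt u_j\setminus\spt u_{j+1}$, it writes $u_j=\tilde u_j+u_j\chi_{B_{R_{j+1}}}$ with $\tilde u_j=u_j\chi_{B_{R_j}\setminus B_{R_{j+1}}}$. The functions $\tilde u_j$ have mutually disjoint supports, so a rearrangement estimate for disjointly supported functions yields
\[
\Bigl\|\sum_j\alpha_j\tilde u_j\Bigr\|_{p^*,q,\mu}\ \geq\ \frac{\lambda}{1+\varepsilon_1}\Bigl(\sum_j|\alpha_j|^q\Bigr)^{1/q},
\]
once $R_{j+1}$ is chosen (inductively, after $u_j$ is fixed) so that the annular piece $\tilde u_j$ already carries a $(1+\varepsilon_1)^{-1}$-fraction of $\|u_j\|_{p^*,q,\mu}$. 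The leftover plateau $u_j\chi_{B_{R_{j+1}}}$ is a constant multiple of $\chi_{B_{R_{j+1}}}$, hence has Lorentz norm at most a preassigned scalar $\gamma_j$ --- and these $\gamma_j$ depend only on $j$, not on the coefficients. The total error is then summed by H\"older:
\[
\Bigl\|\sum_j\alpha_j u_j\chi_{B_{R_{j+1}}}\Bigr\|_{p^*,q,\mu}\ \leq\ \sum_j|\alpha_j|\gamma_j\ \leq\ \|\alpha\|_{\ell^q}\,\|\gamma\|_{\ell^{q'}}\ \leq\ \varepsilon_2\,\|\alpha\|_{\ell^q},
\]
which gives exactly the additive $\varepsilon_2$ loss in~(v). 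The crucial difference from your plan is that the smallness comes from the $\mu$-measure of the plateau ball $B_{R_{j+1}}$ (so $\gamma_j$ is coefficient-free), not from a pointwise domination of $C_j$ by $\alpha_j M_j$ (which is coefficient-dependent and cannot be made uniform). If you want to keep your $A_j$-stratification, the analogous fix is to bound $\|C_j\chi_{A_j}\|_{p^*,q,\mu}$ rather than $|C_j|$ pointwise, and again sum by H\"older; but even then you need to cut at a ball of prescribed small measure rather than at $\spt u_{j+1}$, so you end up reproducing the paper's decomposition.
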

\begin{proof}
We construct the desired sequence of functions inductively. Fix $0 < \lambda < \|E\|$, $\varepsilon_1 > 0$, and $\varepsilon_2 > 0$. If $q\in(1, p]$, let $a\in(0,1)$ be so small that
\begin{equation*}
\frac{a^{q'}}{1-a^{q'}} \leq \varepsilon_2^{q'}.
\end{equation*}
For $j\in\Ne$, set
\begin{equation*}
\gamma_j = \Big\{
  \begin{array}{ll}\vspace{4pt}
    \varepsilon_2 \quad & \hbox{if $q = 1$,} \\
    a^j  & \hbox{if $q\in(1, p]$.}
  \end{array}
\end{equation*}
Note that
\begin{equation}\label{prop:almost_extremal_system_ell_q_prime_norm_of_gamma_j}
\|\{\gamma_j\}_{j = 1}^\infty\|_{\ell_{q'}} = \Big( \sum_{j = 1}^\infty |\gamma_j|^{q'} \Big)^\frac1{q'} \leq \varepsilon_2.
\end{equation}

First, take any $R_1 > 0$. Thanks to Proposition~\ref{prop:extremal_functions_constant_near_zero_radially_symmetric}, 
there is $r_1\in(0, R_1)$ and a function $u_1 = (u_1)_\mu^\bigstar \in \mathcal C_c^1(\Rde)$ such that
\begin{equation*}
\|u_1\|_{p^*,q,\mu} = \lambda,\ \|\nabla u_1\|_{p,q,\mu} = 1,\ \spt u_1 \subseteq B_{R_1},\ \spt \nabla u_1 \subseteq B_{R_1}\setminus \overline{B}_{r_1}.
\end{equation*}
Set $\delta_1 = \mu(B_{R_1} \cap \Sigma)$. Using the absolute continuity of the Lorentz norm $\|\cdot\|_{p^*,q,\mu}$, we can find $\tilde{R}_2\in(0, r_1)$ such that
\begin{align}
\|u_1\chi_{B_{\tilde{R}_2}}\|_{p^*,q,\mu} &\leq \gamma_1 \label{prop:almost_extremal_system_first_step_u_one} \\
\intertext{and}
(1 + \varepsilon_1)\|u_1\chi_{B_{R_1}\setminus B_{\tilde{R}_2}}\|_{p^*,q,\mu} &\geq \|u_1\|_{p^*,q,\mu} \notag.
\end{align}
Furthermore, by the dominated convergence theorem combined with the last inequality, we can find $R_2 \in(0, \tilde{R}_2)$ so small that
\begin{equation}\label{prop:almost_extremal_system_first_step_u_one_tilde}
(1 + \varepsilon_1)^q \int_{\delta_2}^{\delta_1} t^{\frac{q}{p^*} - 1} (u_1\chi_{B_{R_1}\setminus B_{\tilde{R}_2}})_\mu^*(t)^q \dd t \geq \|u_1\|_{p^*,q,\mu}^q,
\end{equation}
where $\delta_2 = \mu(B_{R_2} \cap \Sigma) \in (0, \delta_1)$. Moreover, $R_2$ can be taken so small that $\overline{B}_{R_2} \subsetneq \spt u_1$. Note that both \eqref{prop:almost_extremal_system_first_step_u_one} and \eqref{prop:almost_extremal_system_first_step_u_one_tilde} are still valid with $\tilde{R}_2$ replaced by $R_2$.

Second, let $m\in\Ne$, and assume that we have already found $\{u_j\}_{j = 1}^m$, $\{\delta_j\}_{j = 1}^{m + 1}$, $\{r_j\}_{j = 1}^{m}$, and $\{R_j\}_{j = 1}^{m + 1}$ such that
\begin{align}
&\|u_j\|_{p^*,q,\mu} = \lambda,\ \|\nabla u_j\|_{p,q,\mu} = 1,\ u_j = (u_j)_\mu^\bigstar \in \mathcal C_c^1(\Rde), \notag\\
&\overline{B}_{R_{j+1}} \subsetneq \spt u_j \subseteq B_{R_j},\ \spt \nabla u_j \subseteq B_{R_j}\setminus \overline{B}_{r_j}, \notag\\
&0< R_{j + 1} < r_j < R_j,\ \delta_j = \mu(B_{R_j} \cap \Sigma),\ \delta_{j+1} \in (0, \delta_j/j), \notag\\
&\|u_j\chi_{B_{R_{j + 1}}}\|_{p^*,q,\mu} \leq \gamma_j, \label{prop:almost_extremal_system_inductive_step_hyp_u_j} \\
\intertext{and}
&(1 + \varepsilon_1)^q \int_{\delta_{j+1}}^{\delta_j} t^{\frac{q}{p^*} - 1} (u_1\chi_{B_{R_j}\setminus B_{R_{j+1}}})_\mu^*(t)^q \dd t \geq \|u_j\|_{p^*,q,\mu}^q \label{prop:almost_extremal_system_inductive_step_hyp_u_j_tilde}
\end{align}
for every $j = 1, \dots, m$. The inductive step is very similar to the first step. Thanks to Proposition~\ref{prop:extremal_functions_constant_near_zero_radially_symmetric} with $R = R_{m+1}$, there is $r_{m + 1}\in(0, R_{m + 1})$ and a function $u_{m + 1} = (u_{m + 1})_\mu^\bigstar \in \mathcal C_c^1(\Rde)$ such that $\|u_{m + 1}\|_{p^*,q,\mu} = \lambda$, $\|\nabla u_{m + 1}\|_{p,q,\mu} = 1$, $\spt u_{m+1} \subseteq B_{R_{m + 1}}$, and $\spt \nabla u_{m + 1} \subseteq B_{R_{m + 1}}\setminus \overline{B}_{r_{m + 1}}$. Now, we find $\tilde{R}_{m + 2}\in(0, r_{m + 1})$ such that
\begin{align*}
\|u_{m + 1}\chi_{B_{\tilde{R}_{m + 2}}}\|_{p^*,q,\mu} &\leq \gamma_{m+1}\\
\intertext{and}
(1 + \varepsilon_1)\|u_{m + 1}\chi_{B_{R_{m + 1}}\setminus B_{\tilde{R}_{m + 2}}}\|_{p^*,q,\mu} &\geq \|u_{m + 1}\|_{p^*,q,\mu} \notag.
\end{align*}
Next, we find $\delta_{m + 2}\in (0, \delta_{m+1}/(m+1))$ so small that $R_{m + 2}$ defined by $\delta_{m + 2} = \mu(B_{R_{m+2}} \cap \Sigma)$ satisfies $R_{m + 2} \in(0, \tilde{R}_{m + 2})$, $\overline{B}_{R_{m+2}} \subsetneq \spt u_{m+1}$, and we have
\begin{equation*}
(1 + \varepsilon_1)^q \int_{\delta_{m+2}}^{\delta_{m+1}} t^{\frac{q}{p^*} - 1} (u_{m+1}\chi_{B_{R_{m+1}}\setminus B_{\tilde{R}_{m+2}}})_\mu^*(t)^q \dd t \geq \|u_{m + 1}\|_{p^*,q,\mu}^q.
\end{equation*}
Finally, we observe that the last three inequalities still hold with $\tilde{R}_{m+2}$ replaced by $R_{m+2}$. This finishes the construction of the desired sequence.

Next, we can easily verify that the constructed 
sequence $\{u_j\}_{j=1}^\infty$ fulfills properties (i) to (iv). 
However, we still need to prove that the fifth property is also satisfied. Fix $\{\alpha_j\}_{j = 1}^\infty \subseteq \Re$. Clearly, we may assume, without loss of generality, that the left-hand side of \eqref{prop:almost_extremal_system_superadditivity} is finite. Then
\begin{align}
\Big\| \sum_{j = 1}^\infty \alpha_j u_j \Big\|_{p^*,q,\mu} &\geq \Big\| 
\sum_{j = 1}^\infty \alpha_j \tilde{u}_j \Big\|_{p^*,q,\mu} - \Big\| 
\sum_{j = 1}^\infty \alpha_j (u_j - \tilde{u}_j) \Big\|_{p^*,q,\mu} \notag\\
&\geq \Big\| \sum_{j = 1}^\infty \alpha_j \tilde{u}_j \Big\|_{p^*,q,\mu} - \Big\| 
\sum_{j = 1}^\infty \alpha_j u_j\chi_{B_{R_{j+1}}} \Big\|_{p^*,q,\mu} \label{prop:almost_extremal_system_superadditivity_lower_bound},
\end{align}
where the functions $\tilde u_j$ are defined as
\begin{equation*}
\tilde u_j = u_j\chi_{B_{R_j}\setminus B_{R_{j+1}}},\ j\in\Ne.
\end{equation*}

As for the first term, since the functions $\tilde u_j$ have mutually disjoint supports, 
it follows (e.g., see~\cite[Inequality (3.5)]{LaMi23}) that
\begin{equation*}
\Big( \sum_{j = 1}^\infty \alpha_j \tilde{u}_j \Big)_\mu^* \geq \sum_{j = 1}^\infty |\alpha_j|(\tilde{u}_j)_\mu^*\chi_{(\delta_{j+1}, \delta_j)}.
\end{equation*}
Combining that with \eqref{prop:almost_extremal_system_inductive_step_hyp_u_j_tilde} and with the fact that the intervals $\{(\delta_{j+1}, \delta_j)\}_{j = 1}^\infty$ are mutually disjoint, we obtain
\begin{align*}
\Big\| \sum_{j = 1}^\infty \alpha_j \tilde{u}_j \Big\|_{p^*,q,\mu}^q \geq  \sum_{j = 1}^\infty |\alpha_j|^q \int_{\delta_{j+1}}^{\delta_j} t^{\frac{q}{p^*} - 1} (\tilde{u}_j)_\mu^*(t)^q \dd t \geq \sum_{j = 1}^\infty \frac{\|u_j\|_{p^*,q,\mu}^q}{(1 + \varepsilon_1)^q}|\alpha_j|^q.
\end{align*}
Hence, since $\|u_j\|_{p^*,q,\mu} = \lambda$ for every $j\in\Ne$,
\begin{equation}\label{prop:almost_extremal_system_superadditivity_lower_bound_first_term}
\Big\| \sum_{j = 1}^\infty \alpha_j \tilde{u}_j \Big\|_{p^*,q,\mu} \geq \frac{\lambda}{1 + \varepsilon_1}\Big( \sum_{j = 1}^\infty |\alpha_j|^q \Big)^{\frac1{q}}.
\end{equation}
Concerning the second term, we use \eqref{prop:almost_extremal_system_inductive_step_hyp_u_j} and the H\"older inequality to obtain
\begin{align*}
\Big\| \sum_{j = 1}^\infty \alpha_j u_j\chi_{B_{R_{j+1}}} \Big\|_{p^*,q,\mu} &\leq \sum_{j = 1}^\infty |\alpha_j| \|u_j\chi_{B_{R_{j+1}}}\|_{p^*,q,\mu} \leq \sum_{j = 1}^\infty |\alpha_j| \gamma_j \\
&\leq  \|\{\alpha_j\}_{j = 1}^\infty\|_{\ell_q} \|\{\gamma_j\}_{j = 1}^\infty\|_{\ell_{q'}}.
\end{align*}
Combining that with \eqref{prop:almost_extremal_system_ell_q_prime_norm_of_gamma_j}, we arrive at
\begin{equation}\label{prop:almost_extremal_system_superadditivity_lower_bound_second_term}
\Big\| \sum_{j = 1}^\infty \alpha_j u_j\chi_{B_{R_{j+1}}} \Big\|_{p^*,q,\mu} \leq \varepsilon_2 \|\{\alpha_j\}_{j = 1}^\infty\|_{\ell_q}.
\end{equation}
Finally, by combining \eqref{prop:almost_extremal_system_superadditivity_lower_bound_first_term} and 
\eqref{prop:almost_extremal_system_superadditivity_lower_bound_second_term} with 
\eqref{prop:almost_extremal_system_superadditivity_lower_bound}, we obtain 
\eqref{prop:almost_extremal_system_superadditivity}, which finishes the proof.
\end{proof}

Now, we can show that all Bernstein numbers of the embedding operator
$E\colon V^1L^{p,q}(\Sigma, \mu) \to L^{p^*, q}(\Sigma, \mu)$ as well as its measure of non-compactness coincide with its norm.
\begin{thm}\label{thm:bernstein_numbers}
Let $1\leq q\leq p < D$. Let $E\colon V^1L^{p,q}(\Sigma, \mu) \to L^{p^*, q}(\Sigma, \mu)$ be the embedding operator, where $p^*$ is as in Theorem~\ref{thm:Sob-Lor_emb}. Then
\begin{equation}\label{thm:bernstein_numbers_value}
b_m(E) = \beta(E) = \|E\| \quad \text{for every $m\in\Ne$}.
\end{equation}
In particular, $E$  is maximally non-compact. Furthermore, $E$ is not strictly singular.
\end{thm}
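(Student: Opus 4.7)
The plan is to derive all three conclusions from a single application of Proposition~\ref{prop:almost_extremal_system}. For each $\lambda\in(0,\|E\|)$ and $\varepsilon_1,\varepsilon_2>0$, let $\{u_j\}_{j=1}^\infty$ denote the sequence furnished by that proposition, and set $c=\lambda/(1+\varepsilon_1)-\varepsilon_2$. The structural features I will exploit are: the $\ell_q$-type lower bound (v); the pairwise disjointness of the supports $\{\spt\nabla u_j\}_{j\in\Ne}$, which follows from (ii) together with the nesting $\spt u_k\subseteq\spt u_{j+1}$ for every $k>j$; and the shrinking property (iv).

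For the Bernstein numbers, I will consider the subspace $X_m=\spn(u_1,\dots,u_m)$, which is genuinely $m$-dimensional because the strictly nested supports force $u_1,\dots,u_m$ to be linearly independent. For any $f=\sum_{j=1}^m\alpha_j u_j\in X_m$, property (v) yields $\|f\|_{p^*,q,\mu}\geq c\bigl(\sum_{j=1}^m|\alpha_j|^q\bigr)^{1/q}$. On the other hand, $\nabla f=\sum_{j=1}^m\alpha_j\nabla u_j$ is a sum of functions with pairwise disjoint supports, so the $q$-subadditivity of $\|\cdot\|_{p,q,\mu}^q$ for disjoint supports (valid for $q\leq p$, an immediate consequence of~\eqref{prel:Lorentz_norm_distributional_function} and the concavity of $s\mapsto s^{q/p}$) gives $\|\nabla f\|_{p,q,\mu}^q\leq\sum_{j=1}^m|\alpha_j|^q\|\nabla u_j\|_{p,q,\mu}^q=\sum_{j=1}^m|\alpha_j|^q$. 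Combining these bounds, $\|f\|_{p^*,q,\mu}/\|\nabla f\|_{p,q,\mu}\geq c$ on $X_m$, hence $b_m(E)\geq c$. Letting $\varepsilon_1,\varepsilon_2\to 0^+$ and $\lambda\to\|E\|^-$, and using $b_m(E)\leq b_1(E)=\|E\|$ from axiom (S1), we conclude $b_m(E)=\|E\|$.

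For the ball measure of non-compactness, only the bound $\beta(E)\geq\|E\|$ is nontrivial, and I argue by contradiction. Suppose $\beta(E)<\lambda$; then $E(B_{V^1L^{p,q}(\Sigma,\mu)})$ can be covered by finitely many balls of some radius $r<\lambda$ in $L^{p^*,q}(\Sigma,\mu)$. By (i), each $u_j$ lies in $B_{V^1L^{p,q}(\Sigma,\mu)}$, so pigeonhole extracts a subsequence $\{u_{j_k}\}$ sitting in a single ball $B(y,r)$. The critical step is the decomposition $u_{j_k}-y=(u_{j_k}-y\chi_{\spt u_{j_k}})+(-y\chi_{(\spt u_{j_k})^c})$ as the sum of two functions with disjoint supports; since $|u_{j_k}-y|$ coincides pointwise with the sum of these two disjoint absolute values, the distribution function of $u_{j_k}-y$ dominates that of $u_{j_k}-y\chi_{\spt u_{j_k}}$, which forces $\|u_{j_k}-y\|_{p^*,q,\mu}\geq\|u_{j_k}-y\chi_{\spt u_{j_k}}\|_{p^*,q,\mu}\geq\|u_{j_k}\|_{p^*,q,\mu}-\|y\chi_{\spt u_{j_k}}\|_{p^*,q,\mu}=\lambda-\|y\chi_{\spt u_{j_k}}\|_{p^*,q,\mu}$. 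Property (iv) makes $\chi_{\spt u_{j_k}}\to0$ $\mu$-a.e., and absolute continuity of the Lorentz norm then forces $\|y\chi_{\spt u_{j_k}}\|_{p^*,q,\mu}\to0$. Hence $r\geq\lambda$, contradicting $r<\lambda$. Sending $\lambda\to\|E\|^-$ yields $\beta(E)\geq\|E\|$, so $E$ is maximally non-compact.

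Finally, for the failure of strict singularity, the same sequence provides an infinite-dimensional subspace of $V^1L^{p,q}(\Sigma,\mu)$ on which $E$ is an isomorphism. Define $T\colon\ell_q\to V^1L^{p,q}(\Sigma,\mu)$ by $T\alpha=\sum_{j=1}^\infty\alpha_j u_j$; the disjoint-support gradient bound $\|\nabla(\sum_{j=N+1}^M\alpha_j u_j)\|_{p,q,\mu}^q\leq\sum_{j=N+1}^M|\alpha_j|^q$ makes the partial sums Cauchy in the complete space $V^1L^{p,q}(\Sigma,\mu)$, so $T$ is a well-defined bounded linear operator with $\|T\alpha\|_{V^1L^{p,q}(\Sigma,\mu)}\leq\|\alpha\|_{\ell_q}$. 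Property (v), extended to $\ell_q$ by continuity of the norm, gives $\|ET\alpha\|_{p^*,q,\mu}\geq c\|\alpha\|_{\ell_q}$ and therefore also $\|T\alpha\|_{V^1L^{p,q}(\Sigma,\mu)}\geq(c/\|E\|)\|\alpha\|_{\ell_q}$, so $T$ is a Banach-space isomorphism of $\ell_q$ onto the closed infinite-dimensional subspace $Z=T(\ell_q)$, and $E|_Z$ is an isomorphism onto $E(Z)$. I expect the main obstacle to be the measure-of-non-compactness step: the naive pigeonhole combined with the pairwise distance bound $\|u_j-u_k\|_{p^*,q,\mu}\geq c\,2^{1/q}$ that (v) directly gives only yields $\beta(E)\geq 2^{1/q-1}\|E\|$, and one must instead exploit the shrinking supports together with absolute continuity of the Lorentz norm, as outlined above, to recover the full $\|E\|$.
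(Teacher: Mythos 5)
Your proposal is correct and follows essentially the same route as the paper's proof: all three conclusions are derived from Proposition~\ref{prop:almost_extremal_system}, the Bernstein-number bound uses the disjointly-supported gradients and $q$-subadditivity, and the maximal non-compactness comes from truncating the approximants to the shrinking supports and invoking absolute continuity of the Lorentz norm. The only (inessential) variations are that you use a pigeonhole to fix one center $y$ where the paper truncates a possibly different center $g_{k_j}$ for each $j$, and you phrase the non-strict-singularity via an explicit $\ell_q$-isomorphism $T$ producing a closed subspace, whereas the paper works with $\operatorname{span}\{u_j:j\in\Ne\}$ and the equivalent infimum characterization.
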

\begin{proof}
In view of the property (S1) of (strict) $s$-numbers, in order to prove that $b_m(E) = \|E\|$ for every $m\in\Ne$, it is sufficient to show that
\begin{equation*}
b_m(E) \geq \|E\| \quad \text{for every $m\in\Ne$}.
\end{equation*}
Fix arbitrary $0 < \lambda < \|E\|$, $\varepsilon_1 > 0$, and $\varepsilon_2 > 0$. Let $\{u_j\}_{j = 1}^\infty\subseteq \mathcal C_c^1(\Rde)$ be the sequence of functions whose existence is guaranteed by Proposition~\ref{prop:almost_extremal_system}.

Fix $m\in\Ne$. Let $X_m = \spn\{u_1, \dots,  u_m\}$. Since $\spt u_{j+1} \subsetneq \spt u_j$ for every $j\in\Ne$, $X_m$ is an $m$-dimensional subspace of $V^1L^{p,q}(\Sigma, \mu)$. Hence
\begin{equation}\label{thm:bernstein_numbers_lower_bound}
b_m(E) \geq \inf_{u\in X_m\setminus\{0\}} \frac{\|u\|_{p^*,q,\mu}}{\|\nabla u\|_{p,q,\mu}}.
\end{equation}

Since the functions $\nabla u_j$ have disjoint supports, we have
\begin{equation*}
\Big(\sum_{j = 1}^m \alpha_j \nabla u_j \Big)_{*\mu}(t) = \sum_{j = 1}^m (\nabla u_j)_{*\mu}\Big( \frac{t}{|\alpha_j|} \Big) \quad \text{for every $t>0$}
\end{equation*}
and every $\{\alpha_j\}_{j=1}^\infty\subseteq \Re$ (when $\alpha_j = 0$, $(\nabla u_j)_{*\mu}(t/|\alpha_j|)$ is to be interpreted as $0$). Furthermore, since $q/p\in(0, 1]$, the function $[0, \infty)\ni a \mapsto a^{\frac{q}{p}}$ is subadditive, and so
\begin{equation*}
\Bigg( \sum_{j = 1}^m (\nabla u_j)_{*\mu}\Big( \frac{t}{|\alpha_j|} \Big) \Bigg)^{\frac{q}{p}} \leq \sum_{j = 1}^m (\nabla u_j)_{*\mu}\Big( \frac{t}{|\alpha_j|} \Big)^{\frac{q}{p}} \quad \text{for every $t>0$}
\end{equation*}
and every $\{\alpha_j\}_{j=1}^\infty\subseteq\Re$. Therefore, combining these two observations with \eqref{prel:Lorentz_norm_distributional_function}, we arrive at
\begin{align*}
\Big\| \sum_{j = 1}^m \alpha_j\nabla u_j \Big\|_{p,q,\mu}^q &\leq p\sum_{j = 1}^m \int_0^\infty t^{q - 1} (\nabla u_j)_{*\mu}\Big( \frac{t}{|\alpha_j|} \Big)^\frac{q}{p} \dd t \\
&= p\sum_{j = 1}^m |\alpha_j|^q \int_0^\infty t^{q - 1} (\nabla u_j)_{*\mu}(t)^\frac{q}{p} \dd t = \sum_{j = 1}^m |\alpha_j|^q \|\nabla u_j\|_{p,q,\mu}^q
\end{align*}
for every $\{\alpha_j\}_{j=1}^\infty\subseteq\Re$. Hence, combining this with the fact that $\|\nabla u_j\|_{p,q,\mu} = 1$ for every $j\in\Ne$, we have
\begin{equation}\label{thm:bernstein_numbers_upper_bound_denominator}
\Big\| \sum_{j = 1}^m \alpha_j\nabla u_j \Big\|_{p,q,\mu} \leq \Big(\sum_{j = 1}^\infty |a_j|^q\Big)^\frac1{q}
\end{equation}
for every $\{\alpha_j\}_{j=1}^\infty\subseteq\Re$.

Thanks to \eqref{prop:almost_extremal_system_superadditivity} with \eqref{thm:bernstein_numbers_upper_bound_denominator}, we obtain
\begin{equation*}
\frac{\|\sum_{j = 1}^m \alpha_j u_j\|_{p^*,q,\mu}}{\|\sum_{j = 1}^m \alpha_j \nabla u_j\|_{p,q,\mu}} \geq \Bigg( \frac{\lambda}{1 + \varepsilon_1} - \varepsilon_2 \Bigg)
\end{equation*}
for every $\{\alpha_j\}_{j = 1}^\infty \in \ell_q$. It follows that
\begin{equation}\label{thm:bernstein_numbers_lower_bound_Xm}
\inf_{u\in X_m\setminus\{0\}} \frac{\|u\|_{p^*,q,\mu}}{\|\nabla u\|_{p,q,\mu}} \geq \Bigg( \frac{\lambda}{1 + \varepsilon_1} - \varepsilon_2 \Bigg).
\end{equation}
Therefore, by combining \eqref{thm:bernstein_numbers_lower_bound} with \eqref{thm:bernstein_numbers_lower_bound_Xm}, we arrive at
\begin{equation*}
b_m(E) \geq \Bigg( \frac{\lambda}{1 + \varepsilon_1} - \varepsilon_2 \Bigg).
\end{equation*}
By letting $\varepsilon_2\to0^+$, $\varepsilon_1\to0^+$, and $\lambda\to \|E\|^-$, we obtain \eqref{thm:bernstein_numbers_value}.

Next, as for the fact that $E$ is not strictly singular, we consider the infinite-dimensional subspace $Z$ of $V^1L^{p,q}(\Sigma, \mu)$ defined as $Z = \spn\{u_j: j\in\Ne\}$. Note that
$u=\sum_{j = 1}^\infty \alpha_j u_j \in \mathcal C_c^1(\Rde)$ for every $\{\alpha_j\}_{j = 1}^\infty$ because the sum is locally finite and $\spt u \subseteq \spt u_1$. Using the same arguments as above, we obtain
\begin{equation*}
\inf_{u\in Z\setminus\{0\}} \frac{\|u\|_{p^*,q,\mu}}{\|\nabla u\|_{p,q,\mu}} \geq \|E\|.
\end{equation*}

Finally, in order to establish that $E$ is maximally non-compact, 
we must show that $\| E \| = \beta(E)$. Let us assume, for the sake of contradiction, that $\beta(E) < \| E \|$.
Choose any $\lambda \in (\beta(E), \| E \|)$, and consider the sequence of functions $\{u_j\}_{j=1}^\infty$ 
from Proposition~\ref{prop:almost_extremal_system}, with arbitrarily chosen $\varepsilon_1$ and 
$\varepsilon_2$ (whose specific values are irrelevant). Now, fix an $r \in (\beta(E), \lambda)$. Since $r > \beta(E)$, using the definition of the measure of non-compactness,
there are $m\in\Ne$ and functions $\{g_k\}_{k = 1}^m \subseteq L^{p^*,q}(\Sigma, \mu)$ such that
\begin{equation}\label{thm:bernstein_numbers_upper_bound_max_noncompact_finite_cover}
B_{V^1 L^{p, q}(\Sigma, \mu)} \subseteq \bigcup_{k = 1}^m \big( g_k + r B_{L^{p^*, q}(\Sigma, \mu)} \big).
\end{equation}
Set $\varepsilon_0 = \lambda - r > 0$.

Thanks to \eqref{thm:bernstein_numbers_upper_bound_max_noncompact_finite_cover}, for every $j\in\Ne$ there is $k_j\in\{1, \dots, m\}$ such that
\begin{equation}\label{thm:bernstein_numbers_upper_bound_max_noncompact_eq1}
\|u_j - g_{k_j}\|_{p^*,q,\mu} \leq r.
\end{equation}
Set
\begin{equation*}
h_j = g_{k_j}\chi_{\spt u_j} \quad \text{for every $j\in\Ne$}.
\end{equation*}
Since
\begin{equation*}
|h_j - u_j| \leq |g_{k_j} - u_j| \quad \text{for every $j\in\Ne$ and $\mu$-a.e.~in $\Sigma$},
\end{equation*}
it follows from \eqref{thm:bernstein_numbers_upper_bound_max_noncompact_eq1} that
\begin{equation*}
\|u_j - h_j\|_{p^*,q,\mu} \leq r \quad \text{for every $j\in\Ne$}.
\end{equation*}

Now, on the one hand,
\begin{equation}\label{thm:bernstein_numbers_upper_bound_max_noncompact_eq2}
\|h_j\|_{p^*,q,\mu} \geq	\|u_j\|_{p^*,q,\mu} - \|u_j - h_j\|_{p^*,q,\mu} \geq \lambda - r = \varepsilon_0 \quad \text{for every $j\in\Ne$}.
\end{equation}
On the other hand, since
\begin{align*}
\spt h_j \subseteq \spt u_j \to \emptyset \quad &\text{as $j\to\infty$} \\
\intertext{and}
|h_j| \leq \sum_{k = 1}^m |g_k| \in L^{p^*, q}(\Sigma, \mu) \quad &\text{for every $j\in\Ne$ and $\mu$-a.e.~in $\Sigma$},
\end{align*}
it follows from the absolute continuity of the $\|\cdot\|_{p^*,q,\mu}$ norm that
\begin{equation}\label{thm:bernstein_numbers_upper_bound_max_noncompact_eq3}
\lim_{j \to \infty} \|h_j\|_{p^*,q,\mu} = 0.
\end{equation}
However, a contradiction arises when considering 
both \eqref{thm:bernstein_numbers_upper_bound_max_noncompact_eq2} 
and \eqref{thm:bernstein_numbers_upper_bound_max_noncompact_eq3}, thus achieving the desired result.
\end{proof}

\begin{rem}
It follows from the equality \eqref{thm:bernstein_numbers_value} that all injective (strict) $s$-numbers of the embedding operator $E\colon V^1 L^{p,q}(\Sigma, \mu) \to L^{p^*,q}(\Sigma, \mu)$ coincide with the norm of the embedding. The reason is that Bernstein numbers are the smallest injective strict $s$-numbers (\citep[Theorem~4.6]{P:74}), that is,
\begin{equation*}
b_m(T) \leq s_m(T) \quad \text{for every $m\in\Ne$},
\end{equation*}
for every injective (strict) $s$-number $s$ and for every $T\in B(X,Y)$. A (strict) $s$-number is injective if the values of $s_n(T)$ do not depend on the codomain of $T$. More precisely, $s_n(J_N^Y \circ T) = s_n(T)$ for every closed subspace $N\subseteq Y$ and every $T\in B(X, N)$, where $J_N^Y\colon N \to Y$ is the canonical embedding operator.

Furthermore, the equality \eqref{thm:bernstein_numbers_value} also shows that all entropy numbers $e_m(E)$ of the embedding are equal to $\|E\|$. For $m\in\Ne$, the $m$th entropy number $e_m(E)$ is defined as
\begin{equation*}
e_m(E) = \inf \Big\{ \varepsilon > 0: B_{V^1 L^{p, q}(\Sigma, \mu)} \subseteq \bigcup_{j = 1}^{2^{m - 1}} \big(g_j + rB_{L^{p^*, q}(\Sigma, \mu)}\big), g_1,\dots, g_{2^{m - 1}} \in L^{p^*, q}(\Sigma, \mu) \Big\}.
\end{equation*}
It is easy to see that $\|E\| \geq e_1(E) \geq e_2(E) \geq \cdots \geq 0$ and $\lim_{m\to\infty}e_m(E) = \beta(E)$, which together with $\beta(E) = \|E\|$ implies that $e_m(E) = \|E\|$ for every $m\in\Ne$.
\end{rem}

\setcounter{tocdepth}{1}
\subsection*{Acknowledgment}
We would like to thank the referee for carefully reading the paper and their valuable comments.

\end{document}